\newtheorem{theorem}{Theorem}[section]
\newtheorem{corollary}[theorem]{Corollary}
\newtheorem{remark}[theorem]{Remark}
\newtheorem{lemma}[theorem]{Lemma}
\newtheorem{proposition}[theorem]{Proposition}
\newtheorem{definition}{Definition}[section]
\numberwithin{equation}{section}
\def\ba{\begin{eqnarray}}
	\def\ea{\end{eqnarray}}
\def\tilde{\widetilde}
\def\e1{\epsilon}
\def\o1{\omega}
\def\01{\Omega}
\def\c1{\gamma}
\def\g1{\Sigma}
\def\l1{\Lambda}
\def\v1{\varphi}
\def\d1{\delta}
\def\part{\partial}
\def\f2{F}
\def\h2{{\bf H}}
\def\a2{{\bf A}}
\def\x2{{\bf X}}
\def\t1{\theta}
\def\b1{\beta}
\def\bar{\overline}
\def\bs{\begin{eqnarray*}}
	\def\es{\end{eqnarray*}}
\def\m1{\Theta}
\def\w1{\wedge}
\author{Xiaoli Han, Jiayu Li, Jun Sun}
\address{Xiaoli Han, Department of Mathematical Sciences, Tsinghua University \\ Beijing 100084, P. R. of China.}
\email{hanxiaoli@mail.tsinghua.edu.cn}
\address{Jiayu Li, School of Mathematical Sciences, University of Science and Technology of China Hefei 230026, P. R. of China.}
\email{jiayuli@ustc.edu.cn}
\address{Jun Sun, School of Mathematics and Statistics, Wuhan University, Wuhan, 430072, P. R. of China.}
\email{sunjun@whu.edu.cn}
\keywords{Lagrangian mean curvature flow, Lagrangian translating soliton, blow-up limit}
\thanks {The first author is supported by National Key R$\&$D Program of China 2022YFA1005400 and NFSC No.12031017. The second author is supported by NFSC No. 11721101, 12031017. The third author is supported by NSFC No. 12071352, 12271039.}
\begin{document}

	\title[Translating solitons to Lagrangian mean curvature flow]
	{Translating solitons to a Lagrangian mean curvature flow with zero Maslov class}

	\begin{abstract}
It is known that there is no a Type I singularity for the Lagrangian mean curvature flow with zero Maslov class. In this paper, we study translating solitons which are important models of Type II singularities. A necessary condition for a blow-up limit arising at a Type II singularity of a Lagrangian mean curvature flow with zero Maslov class is provided. As an application, we try to understand the important open question proposed by Joyce-Lee-Tsui \cite{JLT} and Neves-Tian \cite{NT}, whether the Lagrangian translating solitons constructed by Joyce-Lee-Tsui can be a blow-up limit for a Lagrangian mean curvature flow with zero Maslov class.
	\end{abstract}
	
	\maketitle

	{\bf Mathematics Subject Classification (2020):} 53C42 (primary), 53E10 (secondary).

	\section{Introduction}
	
	\allowdisplaybreaks
	
	\vspace{.1in}
	
\noindent Let $(M,\bar\omega,J,\bar g,\Omega)$ be a Calabi-Yau manifold of complex dimension $n$ with K\"ahler form $\bar\omega$, compatible complex structure $J$, associated Riemannian metric $\bar g$ and parallel holomorphic $n$-form $\Omega$. An $n$-dimensional oriented submanifold $\Sigma$ immersed into $M$ is called {\it Lagrangian submanifold} if $\bar\omega|_{\Sigma}=0$. For a Lagrangian submanifold, the induced area form $d\mu_{\Sigma}$ on $\Sigma$ and the holomorphic $n$-form $\Omega$ are related by (\cite{HarLaw}):
	\begin{equation}\label{e-angle}
		\Omega|_\Sigma=e^{i\theta}d\mu_{\Sigma}=\cos\theta d\mu_\Sigma+i\sin\theta d\mu_\Sigma,
	\end{equation}
where $\theta$ is multi-valued and is well-defined up to an additive constant $2k\pi$, $k\in {\mathbb Z}$. Nevertheless, $\cos\theta$ and $\sin\theta$ are single-valued functions on $\Sigma$. The function $\theta$ is called {\it Lagrangian angle} of $\Sigma$.

A Lagrangian submanifold $\Sigma$ is called {\it Special Lagrangian} if $\theta$ is a constant; {\it almost calibrated} if $\cos\theta>0$. We say that $\Sigma$ is {\it Lagrangian with zero Maslov class} if $\theta$ is a single-valued function on $\Sigma$. It is known that the Lagrangian angle and the mean curvature vector of $\Sigma$ are related by (\cite{TY})
		\begin{equation}\label{e-Htheta}
{\bf H}=J\nabla\theta.
		\end{equation}
In particular, a connected Lagrangian submanifold is special if and only if it is minimal.

	\vspace{.1in}

The existence of special Lagrangian submanifolds is an important problem in differential geometry and mathematical physics. In a compact K\"ahler-Einstein surface, Schoen and Wolfson have shown the existence of a branched surface which minimizes areas among Lagrangian competitors in each Lagrangian homology class by variational methods (\cite{ScW}).
Since a special Lagrangian submanifold is minimal, it makes it possible to approach the existence via the mean curvature flow.

Let $(M^{n+k},\bar g)$ be a Riemannian manifold and $\Sigma^n$ be a manifold. Given an initial immersion
	$$F_0:\Sigma\to M ,~~~~\Sigma_0=F_0(\Sigma),$$
	we consider a one-parameter family of smooth maps
	$$F_t=F(\cdot
	,t):\Sigma\rightarrow M$$
	with corresponding images
	$$\Sigma_t=F_t(\Sigma)$$
	satisfying the mean curvature flow equation (MCF):
	\begin{equation}\label{meaneqn}
		\left\{\begin{array}{lll}
			  \frac{d}{dt}F(x,t)={\bf H}(x,t),\\
			  \,\,\,\,\,F(x,0)=F_0(x).
		\end{array}\right.
	\end{equation}
	Here ${\bf H}(x,t)$ is the mean curvature vector of $\Sigma_t$ at $F(x,t)$ in $M$.

When the ambient manifold $M$ is a K\"ahler-Einstein manifold of complex dimension $n$ and the initial submanifold $\Sigma$ is Lagrangian, Smoczyk (\cite{Smo}) proved that along the mean curvature flow, the Lagrangian property is preserved and the Lagrangian angle $\theta$ satisfies the evolution equation
		\begin{equation}\label{e-LMCF-theta}
			\frac{\partial }{\partial t}\theta=\Delta\theta.
		\end{equation}
In this case, the flow is called {\it Lagrangian mean curvature flow}. From (\ref{e-LMCF-theta}), it is easy to see that ``almost calibrated Lagrangian" and ``zero Maslov class" properties are preserved along the mean curvature flow. These flows are called {\it almost calibrated Lagrangian mean curvature flow} and {\it Lagrangian mean curvature flow with zero Maslov class}, respectively.

		\vspace{.1in}
	
	In general, the mean curvature flow may develop a singularity if the maximum of the norm of the second fundamental form blows up (\cite{Huisken1}). In \cite{Neves}, Neves constructed Lagrangians with arbitrarily small Lagrangian angles Hamiltonian isotopic to a plane in ${\mathbb C}^n$, which
develop finite time singularities along the mean curvature flow. Several years later, Neves constructed a Lagrangian mean curvature flow in a compact Calabi-Yau surface which develops a finite time singularity (\cite{Neves2}). Recently, Su-Tsai-Wood constructed infinite time singularities of the Lagrangian mean curvature flow (\cite{STW}). These examples show that singularities of the Lagrangian mean curvature flow are generally unavoidable. 

The counterexamples constructed by Neves are Lagrangians with zero Maslov classes, but not almost calibrated. Actually, it was conjectured by Thomas and Yau that

	\vspace{.1in}

\noindent \textbf{Conjecture (\cite{TY}):} {\em Let L be a flow-stable Lagrangian in a Calabi-Yau manifold. Then the Lagrangian mean curvature flow will exist for all time and converge to the unique special Lagrangian in its Hamiltonian isotopy class.}

	\vspace{.1in}

	 According to the blow-up rate of the second fundamental form, Huisken classified the finite time singularities of the mean curvature flow into two types (\cite{Huisken2}). With the help of the monotonicity formula (\cite{Huisken2}),  Huisken proved that the smooth blow-up limits of a Type I singularity consist of self-shrinkers.  By establishing a new monotonicity formula, Chen-Li (\cite{CL2}) proved that there is no Type I singularity for an almost calibrated Langrangian mean curvature flow (Wang (\cite{Wang1}) independently proved it). In \cite{CL2}, Chen-Li showed further that the tangent flow of an  almost calibrated Lagrangian mean curvature flow is a stationary tangent cone. Neves proved that these results also hold for zero-Maslov class case (\cite{Neves}).

	Since there are no Type I singularities, it is crucial to study finite time Type II singularities.  By the standard blow-up analysis (\cite{HuS2}), it is known that the blow-up limit of a Type II singularity of the mean curvature flow consists of an eternal solution to the mean curvature flow in ${\mathbb C}^n$ with uniformly bounded second fundamental form. 
	
	\vspace{.1in}

	As a typical Type II singularity, translating soliton plays an important role in the study of the mean curvature flow.
	
	\begin{definition}
		A submanifold $\Sigma^n$ in ${\mathbb R}^{n+k}$ is a \textbf{translating soliton} to the mean curvature flow if there is a constant vector ${\bf T}\in {\mathbb R}^{n+k}$, such that
		\begin{align*}
			\Sigma_t=\Sigma+t{\bf T}
		\end{align*}
		is a solution to the mean curvature flow. Equivalently, it holds
		\begin{equation}\label{e-TS}
			{\bf T}^{\perp}={\bf H}
		\end{equation}
		on $\Sigma$ everywhere, where ${\bf H}$ is the mean curvature vector of $\Sigma$ in ${\mathbb R}^{n+k}$.
	\end{definition}

There are a few results on the Type II blow-up limits for the Lagrangian mean curvature flow (\cite{HL2}, \cite{HLS}, \cite{HS}, \cite{LS}, etc.). For instance, we proved that any eternal solution to the mean curvature flow with nonnegative Gauss curvature cannot be a blow-up limit of an almost calibrated Lagrangian mean curvature flow. Neves-Tian proved a rigidity result for Lagrangian translating solitons (\cite{NT}). They showed that under some natural assumptions, if the first Betti number of the Lagrangian translating soliton is finite, and either $\int_{\Sigma}|{\bf H}|^2d\mu$ is finite or $\Sigma$ is static and almost calibrated, then the translating soliton is a plane.

In (\cite{JLT}), Joyce-Lee-Tsui constructed interesting examples for Lagrangian translating solitons with oscillation of the Lagrangian angles arbitrarily small (see Proposition \ref{prop-JLT} in Section 2). These examples are Lagrangian with zero Maslov class but not almost calibrated. It is an interesting question that whether the example can arise as a blow-up limit of the Lagrangian mean curvature flow. This question was proposed by Joyce-Lee-Tsui (\cite{JLT}, Question 3.4) and Neves-Tian (\cite{NT}, 1.1 Open questions).
	\vspace{.1in}

\noindent \textbf{Open Question:} (\cite{JLT}, \cite{NT}) Can the translating solitons with small Lagrangian angle oscillation constructed by Joyce-Lee-Tsui in Proposition \ref{prop-JLT} arise as blow-ups of finite time singularities for the Lagrangian mean curvature flow, particularly when $n=2$?

		\vspace{.1in}

Let $X_0\in M$ be the blow-up point of the Lagrangian mean curvature flow with zero Maslov class at the first singular time $T$ and $\Sigma$ be a translating soliton arising as blow-up limit of the flow at $(X_0,T)$ (we refere to Section 3 and Section 4 for the details of the blow-up procedure). Denote $\underline{\theta}$ and $\overline{\theta}$ the infimum and supremum of the Lagrangian angle $\theta$ on $\Sigma$, respectively. Then we have
\begin{equation}\label{e-blowup} 
\underline{\theta}=\lim_{r\to0}\lim_{t\to T}\inf_{s\in [t,T)}\inf_{B_r(X_0)\cap \Sigma_s}\theta.
\end{equation}
	
	\vspace{.1in}
	
	Our main purpose in this paper is to understand the above important open question, and prove the following result, which may be seen as   a partial answer to the above open question:
	
	\vspace{.1in}

\begin{theorem}\label{thm-1}
Let $\{\Sigma_t\}_{t\in [0,T)}$ be a smooth solution to the Lagrangian mean curvature flow with zero Maslov class which develops a singularity at a finite time $T$. Assume that there exist positive constants $r_0$ and $t_1<T$, such that
\begin{equation}\label{e-blowup-assum} 
\underline{\theta}=\inf_{s\in [t_1,T)}\inf_{B_{r_0}(X_0)\cap \Sigma_s}\theta,
\end{equation}
and
\begin{equation}\label{e-main-assum-2-0} 
\int_{t_1}^T\int_{\Sigma_t}\log(A-\log|\theta-\underline{\theta}|)d\mu_tdt<\infty,
\end{equation}
where $A$ is a positive constant such that $A-\log|\theta-\underline{\theta}|>0$, then the translating soliton $\Sigma$ constructed by Joyce-Lee-Tsui in Proposition \ref{prop-JLT} cannot be a blow-up limit at $(X_0,T)$ of the flow for $n\geq 2$.
\end{theorem}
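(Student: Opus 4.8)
The plan is to argue by contradiction: suppose the Joyce--Lee--Tsui soliton $\Sigma$ does arise as the blow-up limit at $(X_0,T)$, and derive a contradiction with the integrability hypothesis \eqref{e-main-assum-2-0}. First I would record the two structural facts that drive everything. On the one hand, along the flow the shifted angle $f:=\theta-\underline{\theta}$ solves the heat equation $\partial_t f=\Delta f$ by \eqref{e-LMCF-theta}, and by the localization hypothesis \eqref{e-blowup-assum} we have $f\ge 0$ on $B_{r_0}(X_0)\cap\Sigma_s$ for all $s\in[t_1,T)$, so there $f$ is a genuine nonnegative caloric function. On the other hand, on the limiting soliton the angle $\theta_\Sigma$ satisfies the drift-harmonic equation $\Delta\theta_\Sigma+\langle{\bf T},\nabla\theta_\Sigma\rangle=0$ (obtained from \eqref{e-LMCF-theta} by writing the translating motion as $F(\cdot,t)=F(\cdot,0)+t{\bf T}$ and comparing with the pure normal parametrization), so $f_\Sigma:=\theta_\Sigma-\underline{\theta}$ is a positive drift-harmonic function whose infimum $0$ is, by Proposition~\ref{prop-JLT}, attained only asymptotically along the non-compact translating end, with $\int_\Sigma\log(A-\log f_\Sigma)\,d\mu_\Sigma=+\infty$.

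The analytic engine is a pointwise differential inequality for the weight appearing in \eqref{e-main-assum-2-0}. Setting $h:=A-\log f>0$ and $g:=\log h=\log(A-\log f)$, a direct computation using $\partial_t f=\Delta f$ gives $\partial_t h-\Delta h=-|\nabla h|^2$ and hence
\begin{equation*}
\partial_t g-\Delta g=\frac{|\nabla h|^2}{h^2}\,(1-h)=\frac{|\nabla f|^2}{f^2 h^2}\,(1-h).
\end{equation*}
In particular $g$ is a subsolution of the heat equation, $\partial_t g\le\Delta g$, precisely on the region $\{h\ge 1\}=\{f\le e^{A-1}\}$ where the angle is close to $\underline{\theta}$ --- exactly where the integrand of \eqref{e-main-assum-2-0} is large. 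I would then test this inequality against the evolving measure and a spatial cutoff $\varphi$ supported in $B_{r_0}(X_0)$: using the first variation $\partial_t\,d\mu_t=-|{\bf H}|^2 d\mu_t$ and integrating by parts, $\tfrac{d}{dt}\int\varphi g\,d\mu_t$ is controlled by a good (nonpositive) bulk term, a boundary/cutoff flux through $\partial B_{r_0}(X_0)$, and the space-time quantity $\int\int\log(A-\log f)\,d\mu_t\,dt$ itself, which hypothesis \eqref{e-main-assum-2-0} keeps finite. The sign condition \eqref{e-blowup-assum} is what makes $g$ well defined and the maximum-principle structure available on the cutoff region.

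Running this estimate, the subsolution property would force $f$ --- equivalently $\theta-\underline{\theta}$ --- to stay quantitatively bounded below in the interior of the parabolic region unless the missing mass escapes through the boundary $\partial B_{r_0}(X_0)$ or produces a divergent contribution to the weighted space-time integral. I would then feed in the blow-up convergence: the parabolically rescaled flows converge to $\Sigma$, $\theta$ is scale invariant, and on $\Sigma$ the infimum $\underline{\theta}$ is approached along the infinite translating end with $\int_\Sigma\log(A-\log f_\Sigma)\,d\mu_\Sigma=+\infty$. Transporting the interior lower bound through the rescaling shows that the end of $\Sigma$ where $f_\Sigma\to 0$ cannot be accommodated without violating either the finiteness in \eqref{e-main-assum-2-0} or the exact equality $\inf f=0$ demanded by \eqref{e-blowup-assum}; since $n\ge 2$ controls the relevant volume growth of that end, this is the desired contradiction, and $\Sigma$ cannot be the blow-up limit.

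I expect the decisive difficulty to lie in this last transfer step. The obstacle is a scaling mismatch: the naive parabolic rescaling multiplies areas by $\lambda^{n}$ and time by $\lambda^{2}$, so a single rescaled window relates the soliton's divergent end integral to the flow integral only up to a factor $\lambda^{-(n+2)}\to 0$, which on its own destroys the contradiction. Making the argument work therefore requires a scale-robust formulation --- exploiting the scale invariance of $\theta$ together with the sharp, slowly growing double-logarithmic weight in \eqref{e-main-assum-2-0}, and a careful accounting of the boundary flux through $\partial B_{r_0}(X_0)$ uniformly across scales --- so that the contributions of the soliton's end at all scales $t\to T$ add up rather than being suppressed. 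Controlling that boundary term and the accumulated bulk error is where hypothesis \eqref{e-main-assum-2-0} must be used in full strength.
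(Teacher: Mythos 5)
There is a genuine gap, and you have in fact located it yourself: the ``scaling mismatch'' you flag in your last paragraph is not a technical loose end but the central difficulty, and your proposal does not resolve it. The paper's resolution is a Huisken-type monotonicity formula: for a weight $f(\theta)$ with $f''\ge \varepsilon^2 f$, the quantity $\Phi_f=\int_{\Sigma_t} f(\theta)\,\phi\,\rho\,d\mu_t$ with $\rho$ the backward heat kernel is almost monotone, and the ``good'' term it produces is $\int_{\Sigma_t}(f''-\varepsilon^2 f)|{\bf H}|^2\phi\,\rho\,d\mu_t$. Because the Gaussian density $\rho$ is invariant under parabolic rescaling, this term passes to the blow-up limit with no factor of $\lambda^{-(n+2)}$, yielding the necessary condition of Corollary \ref{cor-necessary-TS}: a weighted integral of $(f''-\varepsilon^2 f)|{\bf H}_\Sigma|^2$ over the soliton is bounded by the space-time integral of $f$ along the flow. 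Your alternative engine --- the pointwise subsolution inequality for $g=\log(A-\log(\theta-\underline\theta))$ tested against a spatial cutoff --- carries no such scale invariance, and you give no mechanism for summing the contributions ``at all scales $t\to T$''; moreover the favorable sign of your source term holds only on $\{h\ge 1\}$ and you do not say how the complementary region is controlled.

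The second missing ingredient is the actual source of the contradiction. It is not that $\int_\Sigma \log(A-\log(\theta-\underline\theta))\,d\mu_\Sigma=+\infty$ on the soliton: that divergence lives in the flat $x$-directions and is annihilated by the Gaussian-type weight $e^{\langle X,{\bf T}\rangle/2}K_{\frac n2-1}(|X|/2)$ appearing in the necessary condition. Instead the paper takes the regularized family $f_\delta(v)=\log(A-\log(v+\delta))$, $v=\theta-\underline\theta$, for which $f_\delta''\gtrsim \tfrac{1}{(v+\delta)^2[A-\log(v+\delta)]}$; combining this with the explicit asymptotics $v(y)\sim y^{1-n}e^{-y^2/2}$, $|{\bf H}_\Sigma|^2$ and $d\mu$ on the Joyce--Lee--Tsui soliton, the left-hand side of the necessary condition is bounded below by $c\log(A-\log\delta)\to\infty$ as $\delta\to 0$, while hypothesis (\ref{e-main-assum-2-0}) (together with assumption (\ref{e-blowup-assum}), which guarantees $v+\delta>0$ on the support of the cutoff so that $f_\delta$ is admissible) keeps the right-hand side bounded uniformly in $\delta$. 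Your proposal correctly identifies the function $\log(A-\log|\theta-\underline\theta|)$ as central and correctly uses (\ref{e-blowup-assum}) for positivity, but it contains neither the $\delta$-regularization nor the role of $f''|{\bf H}|^2$ against the scale-invariant Gaussian weight, which are the two ideas that make the argument close.
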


	\vspace{.1in}

\begin{remark}
Denote 
$$\underline{\theta}(t):=\inf_{\Sigma_t}\theta.$$ 
Then (\ref{e-LMCF-theta}) implies that $\underline{\theta}(t)$ is monotonically increasing. Therefore, the main assumption (\ref{e-blowup-assum}) roughly says that the singularity of the flow occurs away from the minimal points of the Lagrangian angle.
\end{remark}

	\vspace{.1in}

The idea in the proof of the above theorem can be described as follows: we establish a new monotonicity formula for the Lagrangian mean curvature flow with zero Maslov class. Applying it we obtain a necessary condition for a blow-up limit of the flow (see Theorem \ref{thm-necessary-LMCF} and Corollary \ref{cor-necessary-TS}). Then we prove the main theorem by a contradiction, which follows by constructing suitable test functions in the necessary condition. 

	\vspace{.1in}
	
	The subsequent sections are organized as follows: in Section 2, we recall Joyce-Lee-Tsui's example of Lagrangian translating solitons with zero Maslov classes; in Section 3, we establish a new monotonicity formula for the Lagrangian mean curvature flow with zero Maslov class; in Section 4, we derive the necessary condition for a blow-up limit of the flow; in Section 5, we prove the main result in this paper.
	
		\vspace{.1in}

\textbf{Acknowlegement:} The authors would like to thank Dr. Yuchen Bi for pointing out a gap in the first version of the paper and thank Dr. Zichang Liu for his nice suggestions after his careful reading.

	\vspace{.2in}

	\section{Joyce-Lee-Tsui's Example of Lagrangian Translating Solitons}
	
	\vspace{.1in}
	
	In this section, we will recall Joyce-Lee-Tsui's example of Lagrangian translating solitons with zero Maslov classes.

	\vspace{.1in}
	
In \cite{JLT}, Joyce-Lee-Tsui constructed  very interesting examples for Lagrangian translating solitons with oscillations of the Lagrangian angles arbitrarily small. Their construction is as follows (Corollary I of \cite{JLT}):

\begin{proposition}\label{prop-JLT}
For given constants $\alpha\geq 0$ and $a_1,\cdots,a_{n-1}>0$, define

\begin{equation}\label{e-phi}
\phi_j(y)=\int_0^y\frac{dt}{\left(\frac{1}{a_j}+t^2\right)\sqrt{P(t)}} ~,
\end{equation}
where
		\begin{equation*}
P(t)=\frac{1}{t^2}\left[ \prod_{k=1}^{n-1}(1+a_kt^2)\cdot e^{\alpha t^2}-1\right],
		\end{equation*}
for $j=1,2,\cdots,n-1$ and $y\in {\mathbb R}$. Then when $\alpha\neq 0$,
\begin{eqnarray}\label{e-L}
L&=&\left\{\left(x_1\sqrt{\frac{1}{a_1}+y^2}e^{i\phi_1(y)},\cdots,x_{n-1}\sqrt{\frac{1}{a_{n-1}}+y^2}e^{i\phi_{n-1}(y)},\frac{y^2- \sum_{j=1}^{n-1}x_j^2}{2}\right.\right.\nonumber\\
& &\left.\left. -\frac{i}{\alpha}\left[\sum_{j=1}^{n-1}\phi_j(y)+\arg\left(y+iP(y)^{-\frac{1}{2}}\right)\right]\right): x_1,\cdots,x_{n-1},y\in {\mathbb R}\right\}
\end{eqnarray}
is a closed, embedded Lagrangian in ${\mathbb C}^n$ diffeomorphic to ${\mathbb R}^n$, which is a Lagrangian translating soliton with translating vector  ${\bf T}=(0,0,\cdots,0,0,\alpha,0)\in {\mathbb R}^{2n}$.

There exist $\bar\phi_1,\cdots,\bar\phi_{n-1}\in (0,\frac{\pi}{2})$ such that $\phi_j(y)\to\bar\phi_j$ as $y\to\infty$ and $\phi_j(y)\to-\bar\phi_j$ as $y\to-\infty$ for $j=1,\cdots,n-1$, and $\bar\phi_1+\cdots+\bar\phi_{n-1}<\frac{\pi}{2}$. For fixed $\alpha>0$, the map $(a_1,\cdots,a_{n-1})\mapsto (\bar\phi_1,\cdots,\bar\phi_{n-1})$ is a 1-1 correspondence from $(0,\infty)^{n-1}$ to $\{(\bar\phi_1,\cdots,\bar\phi_{n-1})\in(0,\frac{\pi}{2})^{n-1}:\bar\phi_1+\cdots+\bar\phi_{n-1}<\frac{\pi}{2}\}$. 

The Lagrangian angle of $L$ in (\ref{e-L}) varies between $ \sum_{j=1}^{n-1}\bar\phi_j$ and $\pi- \sum_{j=1}^{n-1}\bar\phi_j$. In particular, by choosing $ \sum_{j=1}^{n-1}\bar\phi_j$ close to $\frac{\pi}{2}$, the oscillation of the Lagrangian angle can be made arbitrarily small.
\end{proposition}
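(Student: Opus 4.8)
The plan is to read the statement as a verification of the explicit family (\ref{e-L}), organised around the standard reduction for Lagrangian translating solitons. Combining (\ref{e-Htheta}) with the soliton equation (\ref{e-TS}), and using that $J$ interchanges the tangent and normal bundles of a Lagrangian $L$, the condition ${\bf T}^{\perp}={\bf H}=J\nabla\theta$ is equivalent to $\nabla\theta=-(J{\bf T})^{\top}$; indeed $J{\bf T}^{\perp}=(J{\bf T})^{\top}$ since $J{\bf T}^{\top}$ is normal, and applying $-J$ reverses the step. This says precisely that the Lagrangian angle is, up to an additive constant, the restriction to $L$ of the linear function $x\mapsto\langle{\bf T},Jx\rangle$. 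I would therefore proceed in three stages: first confirm that $L$ is Lagrangian; then compute its Lagrangian angle and identify it with this linear function for ${\bf T}=(0,\dots,0,\alpha,0)$, thereby proving $L$ is a translating soliton; and finally analyse the integral (\ref{e-phi}) to extract the asymptotics of $\phi_j$, the bound $\sum_j\bar\phi_j<\tfrac{\pi}{2}$, the range of $\theta$, and the parameter correspondence.

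For the Lagrangian condition I would parametrise $L$ by $(x_1,\dots,x_{n-1},y)\in{\mathbb R}^n$ and pull back $\omega=\sum_{j=1}^{n}d(\operatorname{Re}z_j)\wedge d(\operatorname{Im}z_j)$. Writing $z_j=x_j\sqrt{a_j^{-1}+y^2}\,e^{i\phi_j(y)}$ for $j\le n-1$, a direct computation gives that the term $d(\operatorname{Re}z_j)\wedge d(\operatorname{Im}z_j)$ contributes $x_j(a_j^{-1}+y^2)\phi_j'(y)\,dx_j\wedge dy$, which by (\ref{e-phi}) equals $x_j\,P(y)^{-1/2}\,dx_j\wedge dy$, and there are no $dx_i\wedge dx_j$ terms anywhere. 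The term from $z_n$ contributes $-x_j\,(\operatorname{Im}z_n)'(y)\,dx_j\wedge dy$, so $\omega|_L=0$ reduces to the single scalar identity $(\operatorname{Im}z_n)'(y)=P(y)^{-1/2}$, which holds by the explicit choice of $\operatorname{Im}z_n$ in (\ref{e-L}). Next I would compute $\Omega|_L=dz_1\wedge\cdots\wedge dz_n$ on the coordinate frame; factoring out the volume form, the phase is $e^{i\theta}$ with $\theta(y)=\sum_{j=1}^{n-1}\phi_j(y)+\arg\!\big(y+iP(y)^{-1/2}\big)=-\alpha\operatorname{Im}z_n$. Since ${\bf T}=(0,\dots,0,\alpha,0)$ gives $\langle{\bf T},Jx\rangle=-\alpha\operatorname{Im}z_n$, the reduction of the first paragraph shows $L$ is a Lagrangian translating soliton with this ${\bf T}$.

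For the asymptotics I would set $Q(t)=\prod_{k=1}^{n-1}(1+a_kt^2)\,e^{\alpha t^2}$, so that $P=(Q-1)/t^2$ and $P(t)^{-1/2}=|t|/\sqrt{Q(t)-1}$. Since $Q$ grows super-exponentially for $\alpha>0$, the integrand in (\ref{e-phi}) decays like $e^{-\alpha t^2/2}$ times a rational factor, so each $\phi_j(y)$ converges to a finite $\bar\phi_j>0$ as $y\to+\infty$; the integrand is odd, giving $\phi_j(-y)=-\phi_j(y)$ and the limit $-\bar\phi_j$ as $y\to-\infty$. The range of $\theta$ then follows from two facts: the reflection symmetry $\theta(-y)=\pi-\theta(y)$, immediate from oddness of $\sum_j\phi_j$ and the behaviour of the $\arg$ term under $y\mapsto-y$; and, using $\sum_j(a_j^{-1}+t^2)^{-1}=\tfrac{1}{2t}\big(\log\prod_k(1+a_kt^2)\big)'$ together with the derivative of the $\arg$ term, the cancellation yielding the clean identity $\theta'(y)=-\alpha\,P(y)^{-1/2}<0$ (the same identity as $(\operatorname{Im}z_n)'=P^{-1/2}$ above). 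Hence $\theta$ is strictly decreasing, with $\theta(0)=\tfrac{\pi}{2}$ (since $\sum_j\phi_j(0)=0$ and $\arg(iP(0)^{-1/2})=\tfrac{\pi}{2}$), $\theta(+\infty)=\sum_j\bar\phi_j$ and $\theta(-\infty)=\pi-\sum_j\bar\phi_j$. Monotonicity forces $\sum_j\bar\phi_j<\theta(0)=\tfrac{\pi}{2}$, and the Lagrangian angle takes exactly the values in $\big[\sum_j\bar\phi_j,\ \pi-\sum_j\bar\phi_j\big]$, of oscillation $\pi-2\sum_j\bar\phi_j$.

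It remains to treat embeddedness and the parameter correspondence, and here I expect the main obstacle. Embeddedness is the routine part: the strictly monotone $\theta$, equivalently $\operatorname{Im}z_n=-\theta/\alpha$, recovers $y$ from a point of $L$, after which $|z_j|=|x_j|\sqrt{a_j^{-1}+y^2}$ and the known phase $e^{i\phi_j(y)}$ recover each $x_j$, giving injectivity; properness follows since $\operatorname{Re}z_n=\tfrac12(y^2-\sum_jx_j^2)$ together with the $|z_j|$ control the point at infinity, so $L$ is a closed embedded copy of ${\mathbb R}^n$. For the correspondence I would view $\bar\phi_j=\int_0^{\infty}\big[(a_j^{-1}+t^2)\sqrt{P(t)}\big]^{-1}dt$ as a smooth map $(0,\infty)^{n-1}\to\{\sum_j\bar\phi_j<\tfrac{\pi}{2}\}$, prove surjectivity by analysing the limits as individual $a_j\to 0$ or $a_j\to\infty$, and prove injectivity via a monotonicity or nonvanishing-Jacobian argument. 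This bijectivity is the delicate point: unlike the soliton identities, which collapse to the single relation $\theta'=-\alpha P^{-1/2}$, it requires controlling a coupled family of improper integrals in which every $a_k$ enters each $\bar\phi_j$ through $P$, so decoupling this dependence enough to obtain injectivity and surjectivity simultaneously is where the real work lies; I would fall back on the detailed estimates of \cite{JLT} if a clean self-contained argument proves elusive.
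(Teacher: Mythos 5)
Your proposal is essentially sound, but note that the paper does not prove this proposition at all: it is quoted verbatim as Corollary I of \cite{JLT}, and the remainder of Section 2 only re-derives, by direct computation, the subset of facts needed later --- the frame $F_{x_1},\dots,F_{x_{n-1}},F_y$, the induced metric, the Lagrangian property (checked by showing $\nu_{x_i}=J_0F_{x_i}$ is orthogonal to all the $F_{x_j}$, which is your $\omega|_L=0$ computation in disguise, since $\omega(F_{x_i},F_{x_j})=\langle J_0F_{x_i},F_{x_j}\rangle$), the angle identity (\ref{e-theta}) (itself quoted from \cite{JLT}), its derivative (\ref{e-evo-theta}), the limits (\ref{e-limits}), and $|{\bf H}|^2$ in (\ref{e-H}). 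Your first three stages therefore both reproduce and go beyond what the paper records. The reduction of (\ref{e-TS}) via (\ref{e-Htheta}) to the single statement that $\theta$ agrees up to an additive constant with the restriction of $x\mapsto\langle{\bf T},Jx\rangle$ is correct, and your sign bookkeeping checks out: $\langle{\bf T},Jx\rangle=-\alpha\,\mathrm{Im}\,z_n=\sum_j\phi_j+\gamma$. The ``clean identity'' $\theta'=-\alpha P^{-1/2}$ to which everything collapses is exactly the paper's (\ref{e-evo-theta-2}), and the cancellation you describe between $\sum_j\phi_j'$ and $\gamma'$ is the one carried out in (\ref{e-evo-phi-j})--(\ref{e-evo-theta-2}). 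Deriving $\sum_j\bar\phi_j<\frac{\pi}{2}$ from $\theta(0)=\frac{\pi}{2}$ and strict monotonicity, and the range of $\theta$ from the symmetry $\theta(-y)=\pi-\theta(y)$, is also correct and is a cleaner packaging than the paper's, which simply lists the limits. The computation of the phase of $\Omega|_L$ that you sketch does work out: the determinant of $(dz_i(F_{x_j}))$ factors as a positive real multiple of $\prod_je^{i\phi_j}\cdot\bigl(y+iP(y)^{-1/2}\bigr)$, confirming (\ref{e-theta}).

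The one genuine gap is the asserted 1--1 correspondence $(a_1,\dots,a_{n-1})\mapsto(\bar\phi_1,\dots,\bar\phi_{n-1})$. You correctly identify it as the delicate point but do not prove it; a nonvanishing-Jacobian or monotonicity argument is not obviously available because every $a_k$ enters each $\bar\phi_j$ through $P$, and in \cite{JLT} this is handled by a separate argument. Deferring it to \cite{JLT} is exactly what the paper does for the entire proposition, and the correspondence is never invoked in the paper's later arguments (Sections 3--5 use only the explicit parametrization, the identity (\ref{e-evo-theta-2}), and the asymptotics of $v=\theta-\underline\theta$), so the omission is defensible in context --- but as a self-contained proof of the proposition as stated, your argument is incomplete at precisely this point.
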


\vspace{.1in}

For our later use, we will compute all the geometric quantities for the translating soliton $L$ in the above proposition.

The submanifold is parametrized by
\begin{eqnarray}\label{e-FF}
F(x_1,\cdots,x_{n-1},y)
&=&\left(x_1\sqrt{\frac{1}{a_1}+y^2}\cos\phi_1(y),x_1\sqrt{\frac{1}{a_1}+y^2}\sin\phi_1(y),\cdots,\right.\nonumber\\
& &\left. x_{n-1}\sqrt{\frac{1}{a_{n-1}}+y^2}\cos\phi_{n-1}(y),x_{n-1}\sqrt{\frac{1}{a_{n-1}}+y^2}\sin\phi_{n-1}(y),\right.\nonumber\\
& &\left.\frac{y^2- \sum_{j=1}^{n-1}x_j^2}{2},-\frac{1}{\alpha}\left[ \sum_{j=1}^{n-1}\phi_j(y)+\gamma(y)\right]\right),
\end{eqnarray}
where
\begin{equation}\label{e-gamma-2}
\gamma(y)=\arg\left(y+iP(y)^{-\frac{1}{2}}\right)=
\begin{cases}
   \arctan\frac{1}{\sqrt{ \prod_{k=1}^{n-1}(1+a_ky^2)\cdot e^{\alpha y^2}-1}}, & {\rm if} \ y>0;\\
    \frac{\pi}{2},  & {\rm if} \ y=0;\\
   -\arctan\frac{1}{\sqrt{ \prod_{k=1}^{n-1}(1+a_ky^2)\cdot e^{\alpha y^2}-1}}+\pi, & {\rm if} \ y<0.\\
\end{cases}
\end{equation}
By direct computations, we have
\begin{equation}\label{e-evo-phi-j}
\frac{d\phi_j}{dy}=
\begin{cases}
   \frac{y}{(\frac{1}{a_j}+y^2)\sqrt{ \prod_{k=1}^{n-1}(1+a_ky^2)\cdot e^{\alpha y^2}-1}}, & {\rm if} \ y> 0;\\
   -\frac{y}{(\frac{1}{a_j}+y^2)\sqrt{ \prod_{k=1}^{n-1}(1+a_ky^2)\cdot e^{\alpha y^2}-1}}, & {\rm if} \ y<0,
\end{cases}
\end{equation}
and
\begin{equation}\label{e-evo-gamma}
\frac{d\gamma}{dy}=
\begin{cases}
   -\frac{\alpha y}{\sqrt{ \prod_{k=1}^{n-1}(1+a_ky^2)\cdot e^{\alpha y^2}-1}}-\frac{ \sum_{j=1}^{n-1}\frac{y}{\frac{1}{a_j}+y^2}}{\sqrt{ \prod_{k=1}^{n-1}(1+a_ky^2)\cdot e^{\alpha y^2}-1}}, & {\rm if} \ y> 0;\\
   \frac{\alpha y}{\sqrt{ \prod_{k=1}^{n-1}(1+a_ky^2)\cdot e^{\alpha y^2}-1}}+\frac{ \sum_{j=1}^{n-1}\frac{y}{\frac{1}{a_j}+y^2}}{\sqrt{ \prod_{k=1}^{n-1}(1+a_ky^2)\cdot e^{\alpha y^2}-1}}, & {\rm if} \ y<0.
\end{cases}
\end{equation}
Hence we have
\begin{equation}\label{e-evo-theta-2}
\frac{d}{dy}\left(\sum_{j=1}^{n-1}\phi_j(y)+\gamma(y)\right)=
\begin{cases}
   -\frac{\alpha y}{\sqrt{ \prod_{k=1}^{n-1}(1+a_ky^2)\cdot e^{\alpha y^2}-1}}, & {\rm if} \ y> 0;\\
   \frac{\alpha y}{\sqrt{ \prod_{k=1}^{n-1}(1+a_ky^2)\cdot e^{\alpha y^2}-1}}, & {\rm if} \ y<0.
\end{cases}
\end{equation}

For any $y$, we have
\begin{equation}\label{e-monotone}
    \frac{d\phi_j}{dy}>0,\ \  \frac{d\gamma}{dy}<0, \ \ \frac{d}{dy}\left(\sum_{j=1}^{n-1}\phi_j(y)+\gamma(y)\right)<0.
\end{equation}
We also have by definition that
\begin{equation*}
    \lim_{y\to\infty}\phi_j(y)=\bar\phi_j, \ \lim_{y\to-\infty}\phi_j(y)=-\bar\phi_j,\ \lim_{y\to\infty}\gamma(y)=0, \ \lim_{y\to-\infty}\gamma(y)=\pi.
\end{equation*}
Therefore, we obtain
\begin{equation}\label{e-limits}
    \lim_{y\to\infty}\left(\sum_{j=1}^{n-1}\phi_j(y)+\gamma(y)\right)=\sum_{j=1}^{n-1}\bar\phi_j, \ \lim_{y\to-\infty}\left(\sum_{j=1}^{n-1}\phi_j(y)+\gamma(y)\right)=\pi-\sum_{j=1}^{n-1}\bar\phi_j.
\end{equation}

	\vspace{.1in}

To proceed further, we first consider the case that $y>0$. In this case, it is clear that the tangent space of $L$ is spanned by
		\begin{equation*}
F_{x_1}=\left(\sqrt{\frac{1}{a_1}+y^2}\cos\phi_1(y), \sqrt{\frac{1}{a_1}+y^2}\sin\phi_1(y), 0, 0, \cdots, 0, 0, -x_1, 0\right)^T,
		\end{equation*}
  	\begin{equation*}
F_{x_2}=\left( 0, 0, \sqrt{\frac{1}{a_2}+y^2}\cos\phi_2(y), \sqrt{\frac{1}{a_2}+y^2}\sin\phi_2(y), \cdots, 0, 0, -x_2, 0\right)^T,
		\end{equation*}
    	\begin{equation*}
\cdots
        \end{equation*}
  	\begin{equation*}
F_{x_{n-1}}=\left(0, 0, 0, 0, \sqrt{\frac{1}{a_{n-1}}+y^2}\cos\phi_{n-1}(y), \sqrt{\frac{1}{a_{n-1}}+y^2}\sin\phi_{n-1}(y), \cdots, -x_{n-1}, 0\right)^T
		\end{equation*}
and
\begin{equation*}
F_y=\left(\begin{array}{c}
  \frac{x_1y}{\sqrt{\frac{1}{a_1}+y^2}}\left(\cos\phi_1(y)-\frac{\sin\phi_1(y)}{\sqrt{ \prod_{k=1}^{n-1}(1+a_ky^2)\cdot e^{\alpha y^2}-1}}\right) \\
  \frac{x_1y}{\sqrt{\frac{1}{a_1}+y^2}}\left(\sin\phi_1(y)+\frac{\cos\phi_1(y)}{\sqrt{ \prod_{k=1}^{n-1}(1+a_ky^2)\cdot e^{\alpha y^2}-1}}\right)\\
  \vdots\\
  \frac{x_{n-1}y}{\sqrt{\frac{1}{a_{n-1}}+y^2}}\left(\cos\phi_{n-1}(y)-\frac{\sin\phi_{n-1}(y)}{\sqrt{ \prod_{k=1}^{n-1}(1+a_ky^2)\cdot e^{\alpha y^2}-1}}\right) \\
  \frac{x_{n-1}y}{\sqrt{\frac{1}{a_{n-1}}+y^2}}\left(\sin\phi_{n-1}(y)+\frac{\cos\phi_{n-1}(y)}{\sqrt{ \prod_{k=1}^{n-1}(1+a_ky^2)\cdot e^{\alpha y^2}-1}}\right)\\
  y\\
  \frac{y}{\sqrt{ \prod_{k=1}^{n-1}(1+a_ky^2)\cdot e^{\alpha y^2}-1}}\\
\end{array}
\right).
\end{equation*}
The induced metric on $L$ is given by
\begin{equation*}
g_{x_{j}x_{j}}=\langle F_{x_{j}},F_{x_{j}}\rangle=\frac{1}{a_j}+x_j^2+y^2,
\end{equation*}
\begin{equation*}
g_{x_i,x_j}=\langle F_{x_i},F_{x_j}\rangle=x_ix_j,\ \ 
g_{x_j,y}=\langle F_{x_j},F_y\rangle=0,
\end{equation*}
\begin{equation*}
g_{yy}=\langle F_y,F_y\rangle=\left(\sum_{j=1}^{n-1}\frac{x_j^2}{\frac{1}{a_j}+y^2}+1\right)\frac{ \prod_{k=1}^{n-1}(1+a_ky^2)\cdot e^{\alpha y^2}y^2}{ \prod_{k=1}^{n-1}(1+a_ky^2)\cdot e^{\alpha y^2}-1}.
\end{equation*}
In other words,
\begin{eqnarray*}
g&=&\left(
   \begin{array}{cccc}
      g_{x_1x_1} & \cdots & g_{x_1x_{n-1}} &  g_{x_1y}  \\
      \vdots & \ddots & \vdots & \vdots \\
      g_{x_{n-1}x_1} & \cdots & g_{x_{n-1}x_{n-1}} &  g_{x_{n-1}y}  \\
      g_{yx_1} & \cdots & g_{yx_{n-1}} &  g_{yy}  \\
   \end{array}
\right)\\
&=&\left(
   \begin{array}{cc}
      g_1 & 0 \\
      0 &  \left(\sum_{j=1}^{n-1}\frac{x_j^2}{\frac{1}{a_j}+y^2}+1\right)\frac{ \prod_{k=1}^{n-1}(1+a_ky^2)\cdot e^{\alpha y^2}y^2}{ \prod_{k=1}^{n-1}(1+a_ky^2)\cdot e^{\alpha y^2}-1}   \\
   \end{array}
\right),
\end{eqnarray*}
where
\begin{eqnarray}\label{e-g-1}
g_1=\left(
   \begin{array}{ccc}
      \frac{1}{a_1}+x_1^2+y^2 & \cdots & x_1x_{n-1}  \\
      \vdots & \ddots & \vdots  \\
      x_{n-1}x_1 & \cdots & \frac{1}{a_{n-1}}+x_{n-1}^2+y^2   \\
   \end{array}
\right).
\end{eqnarray}
It is easy to see that
\begin{equation}\label{E-detg-1}
{\rm det}g_1=\left(\sum_{j=1}^{n-1}\frac{x_j^2}{\frac{1}{a_j}+y^2}+1\right)\cdot \prod_{k=1}^{n-1}\left(\frac{1}{a_k}+y^2\right),
\end{equation}
and so
\begin{equation}\label{E-detg}
{\rm det}g=\left(\sum_{j=1}^{n-1}\frac{x_j^2}{\frac{1}{a_j}+y^2}+1\right)^2\cdot \prod_{k=1}^{n-1}\left(\frac{1}{a_k}+y^2\right)^2\cdot\frac{ \prod_{k=1}^{n-1}a_k\cdot e^{\alpha y^2}y^2}{ \prod_{k=1}^{n-1}(1+a_ky^2)\cdot e^{\alpha y^2}-1}.
\end{equation}
In particular, the area element of $L$ with respect to the induced metric is given by
\begin{equation}\label{e-dmu}
d\mu=\frac{\left( \sum_{j=1}^{n-1}\frac{x_j^2}{\frac{1}{a_j}+y^2}+1\right)\cdot  \prod_{k=1}^{n-1}\left(\frac{1}{a_k}+y^2\right)\cdot\sqrt{ \prod_{k=1}^{n-1}a_k\cdot e^{\alpha y^2}y^2}}{\sqrt{ \prod_{k=1}^{n-1}(1+a_ky^2)\cdot e^{\alpha y^2}-1}}dx_1\cdots dx_{n-1}dy.
\end{equation}

\vspace{.1in}

For the standard complex structure
\begin{equation*}
J_0=\left(\begin{array}{ccccccc}
    0 & -1  &  &  &  &  & \\
    1 & 0  &  &  &  &  & \\
      &    & 0 & -1 &  &   &  \\
      &    & 1 & 0 &   &   &  \\
      &    &   &   & \ddots &   &  \\
      &    &   &   &   & 0 & -1\\
      &    &   &   &   & 1 & 0\\
\end{array}
\right)
\end{equation*}
on ${\mathbb C}^n$, we set
		\begin{equation*}
\nu_{x_1}=J_0F_{x_1}=\left(-\sqrt{\frac{1}{a_1}+y^2}\sin\phi_1(y), \sqrt{\frac{1}{a_1}+y^2}\cos\phi_1(y), 0, 0,\cdots,0, 0, 0,-x_1\right)^T,
		\end{equation*}
		\begin{equation*}
\nu_{x_2}=J_0F_{x_2}=\left(0, 0, -\sqrt{\frac{1}{a_2}+y^2}\sin\phi_2(y), \sqrt{\frac{1}{a_2}+y^2}\cos\phi_2(y), \cdots,0, 0, 0,-x_2\right)^T,
		\end{equation*}
		\begin{equation*}
\cdots
		\end{equation*}		\begin{eqnarray*}
&&\nu_{x_{n-1}}=J_0F_{x_{n-1}}\\
&=&\left(0, 0, 0, 0, \cdots,-\sqrt{\frac{1}{a_{n-1}}+y^2}\sin\phi_{n-1}(y), \sqrt{\frac{1}{a_{n-1}}+y^2}\cos\phi_{n-1}(y),0,-x_{n-1}\right)^T,
		\end{eqnarray*}
\begin{equation*}
\nu_y=J_0F_y=\left(\begin{array}{c}
    -\frac{x_1y}{\sqrt{\frac{1}{a_1}+y^2}}\left(\sin\phi_1(y)+\frac{\cos\phi_1(y)}{\sqrt{ \prod_{k=1}^{n-1}(1+a_ky^2)\cdot e^{\alpha y^2}-1}}\right)\\
    \frac{x_1y}{\sqrt{\frac{1}{a_1}+y^2}}\left(\cos\phi_1(y)-\frac{\sin\phi_1(y)}{\sqrt{ \prod_{k=1}^{n-1}(1+a_ky^2)\cdot e^{\alpha y^2}-1}}\right) \\
  \vdots\\
    -\frac{x_{n-1}y}{\sqrt{\frac{1}{a_{n-1}}+y^2}}\left(\sin\phi_{n-1}(y)+\frac{\cos\phi_{n-1}(y)}{\sqrt{ \prod_{k=1}^{n-1}(1+a_ky^2)\cdot e^{\alpha y^2}-1}}\right)\\
   \frac{x_{n-1}y}{\sqrt{\frac{1}{a_{n-1}}+y^2}}\left(\cos\phi_{n-1}(y)-\frac{\sin\phi_{n-1}(y)}{\sqrt{ \prod_{k=1}^{n-1}(1+a_ky^2)\cdot e^{\alpha y^2}-1}}\right) \\
  -\frac{y}{\sqrt{ \prod_{k=1}^{n-1}(1+a_ky^2)\cdot e^{\alpha y^2}-1}}\\
  y\\
\end{array}
\right).
\end{equation*}

By direct computations, we see that $\nu_{x_1}$, $\nu_{x_2}$, $\nu_{x_{n-1}}$, $\nu_{y}$ are normal to $F_{x_{1}}$, $F_{x_{2}}$, $\cdots$, $F_{x_{n-1}}$, $F_y$ so that $J_0$ maps $TL$ onto $NL$. This implies that $L$ is a Lagrangian submanifold with respect to the complex structure $J_0$.
Actually, the Lagrangian angle of $L$ is given by (\cite{JLT})
\begin{equation}\label{e-theta}
\theta(x,y)=\theta(y)=\sum_{j=1}^{n-1}\phi_j(y)+\gamma(y)
\end{equation}
From (\ref{e-evo-theta-2}), we see that
\begin{equation}\label{e-evo-theta}
\frac{d\theta}{dy}=
\begin{cases}
   -\frac{\alpha y}{\sqrt{ \prod_{k=1}^{n-1}(1+a_ky^2)\cdot e^{\alpha y^2}-1}}, & {\rm if} \ y> 0;\\
   \frac{\alpha y}{\sqrt{ \prod_{k=1}^{n-1}(1+a_ky^2)\cdot e^{\alpha y^2}-1}}, & {\rm if} \ y<0.
\end{cases}
\end{equation}
In particular,
\begin{equation*}
\frac{d\theta}{dy}<0,
\end{equation*}
and it follows from (\ref{e-limits}) that
\begin{equation*}
\lim_{y\to\infty}\theta(y)=\sum_{j=1}^{n-1}\bar\phi_j, \  \theta(0)=\frac{\pi}{2}, \ \lim_{y\to-\infty}\theta(y)=\pi-\sum_{j=1}^{n-1}\bar\phi_j.
\end{equation*}

Finally, we compute the norm of the mean curvature vector. We will denote by $g^{-1}=(g^{ij})$ the inverse matrix of $g$. Then it is easy to see that
\begin{eqnarray*}
g^{-1}=\left(
   \begin{array}{cc}
      g_1^{-1} & 0 \\
      0 &  \frac{ \prod_{k=1}^{n-1}(1+a_ky^2)\cdot e^{\alpha y^2}-1}{\left( \sum_{j=1}^{n-1}\frac{x_j^2}{\frac{1}{a_j}+y^2}+1\right)\cdot  \prod_{k=1}^{n-1}(1+a_ky^2)\cdot e^{\alpha y^2}y^2}   \\
   \end{array}
\right),
\end{eqnarray*}
where $g_1^{-1}$ is the inverse matrix of $g_1$ given by (\ref{e-g-1}). 

Since $L$ is a translating soliton with translating vector ${\bf T}=(0,0,\cdots,0,0,\alpha,0)\in {\mathbb R}^{2n}$, we see that $\langle {\bf T}, \nu_{x_j}\rangle=0$ for $1\leq j\leq n-1$ so that ($x_n=y$)
\begin{eqnarray}\label{e-H}
  |\nabla \theta|^2&=& |{\bf H}|^2= |{\bf T}^{\perp}|^2\nonumber\\
  &=&\sum_{i,j=1}^{n}g^{ij}\langle {\bf T}, \nu_{x_i}\rangle \langle {\bf T}, \nu_{x_j}\rangle=g^{yy}\langle {\bf T}, \nu_{y}\rangle^2\nonumber\\
   &=&\frac{ \prod_{k=1}^{n-1}(1+a_ky^2)\cdot e^{\alpha y^2}-1}{\left( \sum_{j=1}^{n-1}\frac{x_j^2}{\frac{1}{a_j}+y^2}+1\right)\cdot  \prod_{k=1}^{n-1}(1+a_ky^2)\cdot e^{\alpha y^2}y^2} \cdot \frac{\alpha^2 y^2}{ \prod_{k=1}^{n-1}(1+a_ky^2)\cdot e^{\alpha y^2}-1}\nonumber\\
   &=&\frac{\alpha^2}{\left( \sum_{j=1}^{n-1}\frac{x_j^2}{\frac{1}{a_j}+y^2}+1\right)\cdot  \prod_{k=1}^{n-1}(1+a_ky^2)\cdot e^{\alpha y^2}}.
\end{eqnarray}

	\vspace{.2in}

\section{Monotonicity Formula for Lagrangian Mean Curvature Flow}

	\vspace{.1in}

In this section, we will derive a general monotonicity formula for the Lagrangian mean curvature flow with zero Maslov class. 

	\vspace{.1in}

Let $f\in C^1({\mathbb R})$ be a nonnegative function of $\theta$ with the second derivative piecewise continuous. Then we have from (\ref{e-LMCF-theta}) that
		\begin{equation}\label{e-LMCF-f}
		\left(\frac{\partial }{\partial t}-\Delta\right)f(\theta)=-f''|{\bf H}|^2.
		\end{equation}

	\vspace{.1in}

Let $i_M$ be the injectivity radius of $M$. Fix any point $X_0\in M$ and $t_0>0$, we choose a cutoff function $\phi\in C_0^{\infty}(B_{2r}(X_0))$ with $\phi\equiv 1$ in $B_{r}(X_0)$, where $0<2r<i_M$. Choose a normal coordinate system in $B_{2r}(X_0)$ and express the immersion $F$ using the coordinates $(F^1, \cdots, F^{2n})$ as a submanifold in ${\mathbb R}^{2n}$. 

Now we assume $X$ is the position vector of $\Sigma_t$ in ${\mathbb R}^{2n}$. Let $H(X,X_0,t,t_0)$ be the backward heat kernel on ${\mathbb R}^{2n}$ and set
\begin{equation*}
\rho(X,X_0,t,t_0)=(4\pi(t_0-t))^{\frac{n}{2}}H(X,X_0,t,t_0)=\frac{1}{(4\pi(t_0-t))^{\frac{n}{2}}}\exp\left(-\frac{|X-X_0|^2}{4(t_0-t)}\right),
		\end{equation*}
for $t<t_0$. Define 
\begin{equation*}
\Phi_f(X_0,t_0,t)=\int_{\Sigma_t}f(\theta)\phi(F)\rho(X,X_0,t,t_0)d\mu_t.
		\end{equation*}
Then by direct computations, we have the following weighted monotonicity formula for the Lagrangian mean curvature flow with zero Maslov class (see \cite{CL2} for an almost calibrated Lagrangian mean curvature flow with $f(\theta)=\frac{1}{\cos\theta}$):

\begin{proposition}\label{prop-monoto} For any $\varepsilon>0$, there exist positive constants $c_1$ and $c_2$  depending on $M$, $F_0$ and $r$ where $r$ is the constant in the definition of $\phi$, such that for any nonnegative function $f\in C^1({\mathbb R})$ with the second derivative piecewise continuous and $\varepsilon\in (0,1)$, we have
\begin{eqnarray}\label{e-monotonicity}
\frac{d}{dt}\left(e^{c_1\sqrt{t_0-t}}\Phi_f(X_0,t_0,t)\right)
&\leq&   -e^{c_1\sqrt{t_0-t}}\int_{\Sigma_t}f\phi\rho(X,X_0,t,t_0)\left|{\bf H}+\frac{(X-X_0)^{\perp}}{2(t_0-t)}\right|^2d\mu_t\nonumber\\
& &  -e^{c_1\sqrt{t_0-t}}\int_{\Sigma_t}\left(f''-\varepsilon^2f\right)|{\bf H}|^2\phi\rho(X,X_0,t,t_0)d\mu_t\nonumber\\
& &+\frac{c_2}{\varepsilon^2}e^{c_1\sqrt{t_0-t}}\int_{\Sigma_t\cap{\rm supp}\phi}fd\mu_t.
\end{eqnarray}
\end{proposition}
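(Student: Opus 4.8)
\emph{Overall strategy.} The plan is to run Huisken's monotonicity computation with the weight $u=f(\theta)\phi(F)$ in place of the constant $1$, controlling the $f(\theta)$--factor through the heat equation $(\frac{\partial}{\partial t}-\Delta)f(\theta)=-f''|{\bf H}|^2$ recorded in (\ref{e-LMCF-f}), and treating the ambient curvature of $M$ as a perturbation of the flat computation carried out in the normal coordinates of $B_{2r}(X_0)\subset\mathbb{R}^{2n}$ (this is the scheme of Chen--Li for $f=1/\cos\theta$). The organizing identity is the weighted first--variation formula: for any compactly supported $u=u(\cdot,t)$ along the flow one has, in the flat model,
\[
\frac{d}{dt}\int_{\Sigma_t}u\,\rho\,d\mu_t=\int_{\Sigma_t}\Big(\frac{d}{dt}u-\Delta_{\Sigma_t}u\Big)\rho\,d\mu_t-\int_{\Sigma_t}u\,\rho\Big|{\bf H}+\frac{(X-X_0)^\perp}{2(t_0-t)}\Big|^2 d\mu_t ,
\]
where $\frac{d}{dt}$ is the derivative along the flow. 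I would first re-derive this from $\frac{d}{dt}d\mu_t=-|{\bf H}|^2d\mu_t$, the pointwise evolution of $\rho$, and the divergence identity $\mathrm{div}_{\Sigma_t}(X-X_0)^\top=n+\langle(X-X_0)^\perp,{\bf H}\rangle$, which is exactly what reconciles the tangential gradient terms into the clean right-hand side.

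\emph{Extracting the principal terms.} Substituting $u=f(\theta)\phi$ and expanding,
\[
\frac{d}{dt}u-\Delta u=\phi\big(\tfrac{d}{dt}f-\Delta f\big)+f\big(\tfrac{d}{dt}\phi-\Delta\phi\big)-2\langle\nabla f,\nabla\phi\rangle ,
\]
I would use (\ref{e-LMCF-f}), $\tfrac{d}{dt}f-\Delta f=-f''|{\bf H}|^2$, together with $|\nabla\theta|^2=|{\bf H}|^2$ from (\ref{e-Htheta}), and $\tfrac{d}{dt}\phi=\langle\bar\nabla\phi,{\bf H}\rangle$ for the cutoff. This already produces the two principal terms of the statement, $-\int f''|{\bf H}|^2\phi\rho$ and $-\int f\phi\rho|{\bf H}+\frac{(X-X_0)^\perp}{2(t_0-t)}|^2$, plus three error terms $E_1=\int f\langle\bar\nabla\phi,{\bf H}\rangle\rho$, $E_2=-\int f(\Delta_{\Sigma_t}\phi)\rho$ and $E_3=-2\int\langle\nabla f,\nabla\phi\rangle\rho$. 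The role of the cutoff is that $\bar\nabla\phi$ and $\Delta\phi$ are supported in the annulus $B_{2r}(X_0)\setminus B_r(X_0)$, on which $|X-X_0|\ge r$, so $\rho$, $(t_0-t)^{-1}\rho$ and all derivatives of $\rho$ are bounded uniformly for $t<t_0$.

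\emph{Disposing of the errors.} For $E_3$ I would integrate by parts to move the derivative off $f$, obtaining $E_3=2\int f\big(\langle\nabla\rho,\nabla\phi\rangle+\rho\,\Delta_{\Sigma_t}\phi\big)$; this is the key trick, since it avoids the $(f')^2$ that a naive Young estimate would create and leaves only annulus--supported terms with coefficient $f$ and bounded kernel factors, whence $E_2+E_3\le c\int_{\Sigma_t\cap\mathrm{supp}\,\phi}f$. The term $E_1$ is the only one genuinely linear in ${\bf H}$; here I would apply Young's inequality, $\int f\langle\bar\nabla\phi,{\bf H}\rangle\rho\le\varepsilon^2\int f|{\bf H}|^2\phi\rho+\frac{1}{4\varepsilon^2}\int f\frac{|\bar\nabla\phi|^2}{\phi}\rho$, choosing $\phi$ to be the square of a smooth cutoff so that $|\bar\nabla\phi|^2/\phi$ is bounded. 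The first piece merges with $-\int f''|{\bf H}|^2\phi\rho$ to give exactly $-\int(f''-\varepsilon^2 f)|{\bf H}|^2\phi\rho$, while the second feeds the $\frac{c_2}{\varepsilon^2}\int_{\Sigma_t\cap\mathrm{supp}\,\phi}f$ term.

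\emph{Passing to $M$ and the main obstacle.} To replace $\mathbb{R}^{2n}$ by $M$ I would redo the computation in normal coordinates, where the metric is $\delta+O(|X-X_0|^2)$ and its Christoffel symbols are $O(|X-X_0|)$; the resulting corrections to the evolution of $\rho$ and to the relation between the intrinsic and Euclidean mean curvatures are bounded by $C|X-X_0|$ times the density, \emph{without} an extra factor of $|{\bf H}|$. By the standard Gaussian moment estimates for $\rho$ their integral is controlled by a term of the form $\frac{c_1}{2\sqrt{t_0-t}}\Phi_f$, which is precisely cancelled by $\frac{d}{dt}e^{c_1\sqrt{t_0-t}}=-\frac{c_1}{2\sqrt{t_0-t}}e^{c_1\sqrt{t_0-t}}$ upon differentiating the product; this explains the exponential prefactor and yields the stated inequality. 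The main obstacle, in my view, is the bookkeeping rather than any single estimate: one must notice that $f''$ has no sign, so that $-f''|{\bf H}|^2$ cannot be used to absorb $E_1$ and the $\varepsilon$--regularization is indispensable only for this one ${\bf H}$--linear term; one must use the integration by parts in $E_3$ to keep the non-$\varepsilon$ remainder proportional to $\int_{\mathrm{supp}\,\phi}f$ rather than $\int(f')^2/f$; and one must verify that the curvature corrections carry no new $|{\bf H}|$--linear contributions, so that $c_1$ and $c_2$ depend only on $M$, $F_0$ and $r$, independently of $f$ and $\varepsilon$.
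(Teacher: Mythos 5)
Your proposal is correct and follows essentially the same route as the paper: the same expansion of $\frac{d}{dt}\Phi_f$ using $(\frac{\partial}{\partial t}-\Delta)f(\theta)=-f''|{\bf H}|^2$, the same integration by parts pushing derivatives off $f$ onto $\rho$ and $\phi$ so that the error terms are annulus-supported and proportional to $\int f$, the same Young inequality on the single ${\bf H}$-linear term $\langle D\phi,{\bf H}\rangle$ producing the $\varepsilon^2 f$ correction, and the same absorption of the Christoffel-symbol error $\frac{\langle X-X_0,g^{ij}\Gamma_{ij}^{\alpha}v_{\alpha}\rangle}{t_0-t}\rho\leq \frac{c_1\rho}{2\sqrt{t_0-t}}+C$ by the prefactor $e^{c_1\sqrt{t_0-t}}$. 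The only cosmetic difference is that the paper bounds $|D\phi|^2/\phi\leq 2\max_{\phi>0}|D^2\phi|^2$ for a general nonnegative cutoff rather than taking $\phi$ to be the square of a cutoff.
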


\begin{proof}
Note that
\begin{equation*}
\Delta F={\bf H}+g^{ij}\Gamma_{ij}^{\alpha}v_{\alpha},
\end{equation*}
where $\{v_{\alpha}\}_{\alpha=n+1,\cdots ,2n}$ is a basis of $N\Sigma_t$, $g_{ij}$ is the induced metric on $\Sigma$, $(g^{ij})$ is the inverse of $(g_{ij})$ and $\Gamma_{ij}^{\alpha}$ is the Christoffel symbol on $M$. Then we have
\begin{eqnarray}\label{e-rho-2}
\left(\frac{\partial}{\partial t}+\Delta\right)\rho(X,X_0,t,t_0)
&=&-\left(\frac{\langle X-X_0,{\bf H}+g^{ij}\Gamma_{ij}^{\alpha}v_{\alpha}\rangle}{t_0-t}+\frac{|(X-X_0)^{\perp}|^2}{4(t_0-t)^2}\right)\rho\nonumber\\
&=&-\left(\left|{\bf H}+\frac{(X-X_0)^{\perp}}{2(t_0-t)}\right|^2-|{\bf H}|^2+\frac{\langle X-X_0,g^{ij}\Gamma_{ij}^{\alpha}v_{\alpha}\rangle}{t_0-t}\right)\rho.
\end{eqnarray}
We also have
\begin{equation*}
\frac{\partial}{\partial t}d\mu_t=-|{\bf H}|^2d\mu_t, \ \ \frac{\partial}{\partial t}\phi(F)=\langle D\phi,{\bf H}\rangle.
\end{equation*}
We compute using (\ref{e-LMCF-f}) that
\begin{eqnarray*}
\frac{d}{dt}\Phi_f(X_0,t_0,t)
&=& \frac{d}{dt}\int_{\Sigma_t}f(\theta)\phi(F)\rho(X,X_0,t,t_0) d\mu_t\nonumber\\
&=& \int_{\Sigma}\phi\rho\left(\frac{\partial}{\partial t}-\Delta\right)f d\mu_t+\int_{\Sigma}\phi f\left(\frac{\partial}{\partial t}+\Delta\right)\rho d\mu_t\nonumber\\
&&+\int_{\Sigma_t}\left(\frac{\partial}{\partial t}\phi\right)f\rho d\mu_t-\int_{\Sigma}f\phi\rho |{\bf H}|^2 d\mu_t\nonumber\\
&&+\int_{\Sigma_t}\phi\rho\Delta fd\mu_t-\int_{\Sigma_t}\phi f\Delta \rho d\mu_t\nonumber\\
&\leq& -\int_{\Sigma}\phi \rho\left\{f''|{\bf H}|^2+f\left|{\bf H}+\frac{(X-X_0)^{\perp}}{2(t_0-t)}\right|^2\right\} d\mu_t\nonumber\\
&& +\int_{\Sigma_t}\langle D\phi,{\bf H}\rangle f\rho d\mu_t+\int_{\Sigma_t}f\rho \Delta\phi d\mu_t+2\int_{\Sigma_t}f\langle\nabla\rho,\nabla\phi\rangle d\mu_t\nonumber\\
&&-\int_{\Sigma_t}\phi f\rho \frac{\langle X-X_0,g^{ij}\Gamma_{ij}^{\alpha}v_{\alpha}\rangle}{t_0-t}d\mu_t.
\end{eqnarray*}
As in the proof of Proposition 2.1 in \cite{CL2} (see (13) in \cite{CL2}), we see that
\begin{equation}
\left|\frac{\langle X-X_0,g^{ij}\Gamma_{ij}^{\alpha}v_{\alpha}\rangle}{t_0-t}\right|\rho\leq  c_1\frac{\rho(X,X_0,t,t_0)}{2\sqrt{t_0-t}}+C.
\end{equation}
Notice that $D\phi\equiv0$ in $B_r(X_0)$. So $|\rho \Delta\phi|$ and $\langle\nabla\rho,\nabla\phi\rangle$ are bounded in $B_{2r}(X_0)$. Hence
\begin{equation*}
\int_{\Sigma_t}|f\rho \Delta\phi| d\mu_t+2\int_{\Sigma_t}|f\langle\nabla\rho,\nabla\phi\rangle| d\mu_t\leq C\int_{\Sigma_t}fd\mu_t,
\end{equation*}
where $C$ depends only on $r$ and $\max\left(|D^2\phi|^2+|D\phi|^2\right)$.
We also have that for any $\varepsilon\in (0,1)$
\begin{equation*}
|\langle D\phi,{\bf H}\rangle|\leq \varepsilon^2\phi |{\bf H}|^2+\frac{1}{4\varepsilon^2}\frac{|D\phi|^2}{\phi}.
\end{equation*}
Furthermore, since $\phi\in C_0^{\infty}(B_{2r}(X_0), {\mathbb R}^+)$, we have
\begin{equation*}
\frac{|D\phi|^2}{\phi}\leq 2\max_{\phi>0}|D^2\phi|^2.
\end{equation*}
Hence we have
\begin{eqnarray}\label{e-Psif}
\frac{d}{dt}\Phi_f(X_0,t_0,t)
&\leq& -\int_{\Sigma_t}f\phi\rho(X,X_0,t,t_0)\left|{\bf H}+\frac{(X-X_0)^{\perp}}{2(t_0-t)}\right|^2d\mu_t\nonumber\\
& &  -\int_{\Sigma_t}\left(f''-\varepsilon^2f\right)|{\bf H}|^2\phi\rho(X,X_0,t,t_0)d\mu_t\nonumber\\
& &+\frac{c_1}{2\sqrt{t_0-t}}\Phi_f+\frac{C}{\varepsilon^2}\int_{\Sigma_t\cap{\rm supp}\phi}fd\mu_t.
\end{eqnarray}
Therefore, we obtain 
\begin{eqnarray*}
\frac{d}{dt}\left(e^{c_1\sqrt{t_0-t}}\Phi_f(X_0,t_0,t)\right)
&\leq&  -e^{c_1\sqrt{t_0-t}}\int_{\Sigma_t}f\phi\rho(X,X_0,t,t_0)\left|{\bf H}+\frac{(X-X_0)^{\perp}}{2(t_0-t)}\right|^2d\mu_t\nonumber\\
& &  -e^{c_1\sqrt{t_0-t}}\int_{\Sigma_t}\left(f''-\varepsilon^2f\right)|{\bf H}|^2\phi\rho(X,X_0,t,t_0)d\mu_t\nonumber\\
& &+\frac{c_2}{\varepsilon^2}e^{c_1\sqrt{t_0-t}}\int_{\Sigma_t\cap{\rm supp}\phi}fd\mu_t.
\end{eqnarray*}
This completes the proof of the proposition.
\end{proof}

	\vspace{.1in}

When the ambient manifold is the Euclidean space ${\mathbb C}^n$ and suppose that the Lagrangian mean curvature flow with zero Maslov class develops finite time singularity at $(X_0, T)$ for some $X_0$ in ${\mathbb C}^n$, then we can take the cutoff function $\phi\equiv 1$. 
Checking the proof of the above proposition carefully, we obtain that

\begin{corollary}\label{cor-monoto} Let $\Sigma_t$ be a Lagrangian mean curvature flow with zero Maslov class in ${\mathbb C}^n$. suppose that the flow with zero Maslov class develops finite time singularity at $(X_0, T)$ for some $X_0\in {\mathbb C}^n$. Then for any nonnegative function $f\in C^1({\mathbb R})$ with the second derivative piecewise continuous, we have
\begin{eqnarray}\label{e-monotonicity-cn}
\frac{d}{dt}\int_{\Sigma_t}f(\theta)\rho(X,X_0,t,t_0)d\mu_t
&=&   -\int_{\Sigma_t}f\rho(X,X_0,t,t_0)\left|{\bf H}+\frac{(X-X_0)^{\perp}}{2(t_0-t)}\right|^2d\mu_t\nonumber\\
& &  -\int_{\Sigma_t}f''|{\bf H}|^2\rho(X,X_0,t,t_0)d\mu_t.
\end{eqnarray}
\end{corollary}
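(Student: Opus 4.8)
The plan is to re-run the computation in the proof of Proposition~\ref{prop-monoto}, locate the two places where an inequality rather than an equality was introduced, and observe that both sources are absent in the present setting, so that (\ref{e-monotonicity}) sharpens into the equality (\ref{e-monotonicity-cn}). Inspecting that proof, the only non-equalities enter through (i) the estimate bounding the ambient-curvature term $\frac{\langle X-X_0,g^{ij}\Gamma_{ij}^{\alpha}v_{\alpha}\rangle}{t_0-t}\rho$, and (ii) the application of Young's inequality to $\langle D\phi,{\bf H}\rangle$, which also generated the cutoff error $\frac{c_2}{\varepsilon^2}\int_{\Sigma_t\cap\mathrm{supp}\,\phi}f\,d\mu_t$.

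First, since the ambient manifold is the flat space $\mathbb{C}^n$, all Christoffel symbols vanish, $\Gamma_{ij}^{\alpha}\equiv0$, so the last term in (\ref{e-rho-2}) disappears and the evolution of $\rho$ becomes the exact identity
\[
\left(\frac{\partial}{\partial t}+\Delta\right)\rho=-\left(\left|{\bf H}+\frac{(X-X_0)^{\perp}}{2(t_0-t)}\right|^2-|{\bf H}|^2\right)\rho.
\]
There is then nothing to estimate in (i), no constant is produced, and the exponential factor may be taken to be $1$ (i.e. $c_1=0$). Second, because the singularity forms at the interior point $(X_0,T)$, we may take $\phi\equiv1$; then $\partial_t\phi=\langle D\phi,{\bf H}\rangle=0$, $\nabla\phi\equiv0$ and $\Delta\phi\equiv0$, so every cutoff term in (\ref{e-Psif}) vanishes and the step (ii) becomes vacuous, removing both the $\varepsilon^2f$ correction and the term $\frac{c_2}{\varepsilon^2}\int_{\Sigma_t\cap\mathrm{supp}\,\phi}f\,d\mu_t$.

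It then remains to assemble the surviving terms. Differentiating under the integral sign and using $\partial_td\mu_t=-|{\bf H}|^2d\mu_t$,
\[
\frac{d}{dt}\int_{\Sigma_t}f\rho\,d\mu_t=\int_{\Sigma_t}\left[(\partial_tf)\rho+f\,\partial_t\rho-f\rho|{\bf H}|^2\right]d\mu_t,
\]
I would substitute $(\partial_t-\Delta)f=-f''|{\bf H}|^2$ from (\ref{e-LMCF-f}) together with the flat identity for $(\partial_t+\Delta)\rho$ above. The decisive cancellation is that the $+\int_{\Sigma_t}f|{\bf H}|^2\rho$ produced by $(\partial_t+\Delta)\rho$ exactly cancels the $-\int_{\Sigma_t}f\rho|{\bf H}|^2$ coming from $\partial_td\mu_t$, leaving $-\int_{\Sigma_t}f''|{\bf H}|^2\rho-\int_{\Sigma_t}f\left|{\bf H}+\frac{(X-X_0)^{\perp}}{2(t_0-t)}\right|^2\rho$ together with the remainder $\int_{\Sigma_t}(\rho\Delta f-f\Delta\rho)\,d\mu_t$.

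The one genuine point to verify is that this Green's-identity remainder $\int_{\Sigma_t}(\rho\Delta f-f\Delta\rho)\,d\mu_t$ vanishes. At the compactly supported level of Proposition~\ref{prop-monoto} this is automatic, but with $\phi\equiv1$ the submanifold $\Sigma_t$ is non-compact and the integration by parts must be justified by the Gaussian decay of $\rho$. This is the main obstacle: one needs that $\rho\nabla f$ and $f\nabla\rho$ are integrable and that the flux through geodesic spheres of radius $R$ in $\Sigma_t$ tends to $0$ as $R\to\infty$, which follows from the smoothness of the flow on $[0,T)$, the control on $f(\theta)$ and on ${\bf H}$, and the Euclidean area growth of $\Sigma_t$. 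Granting this integrability, differentiation under the integral is also legitimate, and the identity (\ref{e-monotonicity-cn}) follows.
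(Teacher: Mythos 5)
Your proposal is correct and follows essentially the same route as the paper, which simply instructs the reader to rerun the proof of Proposition \ref{prop-monoto} with $\phi\equiv 1$ and with vanishing Christoffel symbols; you correctly identify that these are the only two sources of inequality and that the $|{\bf H}|^2$ terms cancel. If anything you are more careful than the paper, which silently discards the Green's-identity remainder $\int_{\Sigma_t}(\rho\Delta f-f\Delta\rho)\,d\mu_t$ without comment, whereas you rightly note that for a non-compact $\Sigma_t$ this requires the Gaussian decay of $\rho$ together with area-growth control (and is trivial when $\Sigma_t$ is compact).
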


	\vspace{.2in}

\section{Blow-up Analysis of Lagrangian Mean Curvature Flows}

	\vspace{.1in}

In this section, we will provide a necessary condition on an eternal mean curvature flow that arises as a blow-up limit at a finite time singularity of the Lagrangian mean curvature flow with zero Maslov class.

	\vspace{.1in}

Using the above new monotonicity formula with the choice of $f(\theta)=\theta^2$, we can show that there is no a finite time Type I singularity for the Lagrangian mean curvature flow with zero Maslov class (\cite{Neves}). Therefore we may assume that the flow develops a Type II singularity at the first finite singular time $T$. Namely, 
\begin{equation*}
\limsup_{t\to T}\left[(T-t)\max_{\Sigma_t}|{\bf A}|^2\right]=\infty.
\end{equation*}

Following \cite{HuS2}, we choose a sequence $\{x_k,t_k\}$ as follows. For any integer $k\geq 1$, let $t_k\in [0,T-\frac{1}{k}]$, $x_k\in \Sigma$ such that
\begin{equation}\label{e-sff-max}
|{\bf A}|^2(x_k,t_k)\left(T-\frac{1}{k}-t_k\right)=\max_{x\in \Sigma, t\leq T-\frac{1}{k}}|{\bf A}|^2(x,t)\left(T-\frac{1}{k}-t\right).
\end{equation}
Furthermore, set
\begin{equation}
L_k=|{\bf A}|(x_k,t_k), \ \ \alpha_k=-L_k^{2}t_k, \ \ \omega_k=L_k^2\left(T-\frac{1}{k}-t_k\right).
\end{equation}
Because the singularity is of Type II, we see from Lemma 4.3 of \cite{HuS2} that
\begin{equation}
t_k\to T, \ L_k\to\infty, \ \alpha_k\to-\infty, \ \ \omega_k\to\infty
\end{equation}
as $k\to\infty$.

Since $M$ is closed, without loss of generality, we may assume that $F(x_k,t_k):=X_k\to \tilde{X}_0$. We will choose a normal coordinate system around $\tilde{X}_0$ so that we can view the flow around $\tilde{X}_0$ as a family of submanifolds in ${\mathbb R}^{2n}$.

Now for any $k\geq 1$ we consider the rescaled flow
\begin{equation}
\tilde{\Sigma}^k_{\tau}:=L_k(\Sigma_{L_k^{-2}\tau+t_k}-X_k)
\end{equation}
given by
\begin{equation*}
\tilde{F}_k(\cdot,\tau)=L_k(F(\cdot,L_k^{-2}\tau+t_k)-F(x_k,t_k)), \ \ \tau\in [\alpha_k,\omega_k].
\end{equation*}
Then, for each $k$, $\tilde{\Sigma}^k_{\tau}$ is a solution to the mean curvature flow with a uniformly bounded second fundamental form. By the standard argument we see that up to a subsequence, $\tilde{\Sigma}^k_{\tau}$ converges smoothly locally to a smooth complete nonflat eternal solution to the mean curvature flow $\tilde{\Sigma}^{\infty}_{\tau}$ in ${\mathbb R}^{2n}$ which is given by $\tilde{F}_\infty$. We also assume $\tilde{X}_k$ and $\tilde{X}_\infty$ are the position vectors of $\tilde{\Sigma}^{k}_{\tau}$ and $\tilde{\Sigma}^{\infty}_{\tau}$ in ${\mathbb R}^{2n}$, respectively. Furthermore, we have $|\tilde{\bf A}|\leq 1$ equal to 1 at some point. $\tilde{\Sigma}^{\infty}_{\tau}$ is called {\it blow-up limit} of the mean curvature flow.

	\vspace{.1in}

With these preparations, we can prove the following necessary condition for blow-up flows, especially for Lagrangian translating solitons that arise as blow-up limit of Lagrangian mean curvature flow with zero Maslov class:

	\vspace{.1in}

\begin{theorem}\label{thm-necessary-LMCF}
If $\tilde{\Sigma}_{\tau}^{\infty}$ is an eternal solution to the mean curvature flow in ${\mathbb C}^n$ arising as a blow-up limit at a finite time singularity of the Lagrangian mean curvature flow with zero Maslov class, then for any nonnegative function $f\in C^1({\mathbb R})$ with the second derivative piecewise continuous satisfying 
\begin{equation}\label{assum1}
f''\geq \varepsilon^2f
\end{equation}
for some $\varepsilon>0$, and any fixed $t_1<T$, there is a constant $c_3$ depending only on the initial surface $\Sigma_0$, $M$, $\varepsilon$ and $T$, such that for any ${\bf a}\in {\mathbb R}^{2n}$
\begin{eqnarray}\label{e-necessary-flow}
& & \int_{-\infty}^{0}\int_{\tilde{\Sigma}_{\tau}^{\infty}}(f''-\varepsilon^2f)|{\bf H}_{\tilde{\Sigma}_{\tau}^{\infty}}|^2\frac{1}{(-\tau)^{\frac{n}{2}}}\exp\left(\frac{|\tilde{X}_\infty-{\bf a}|^2}{4\tau}\right)d\tilde{\mu}^{\infty}_\tau d\tau\\
&\leq& c_3\left(\int_{t_1}^T\int_{\Sigma_t\cap{\rm supp}\phi}fd\mu_tdt+\frac{1}{(T-t_1)^{\frac{n}{2}}}\int_{\Sigma_{t_1}}\phi fd\mu_{t_1}\right).\nonumber
\end{eqnarray}
\end{theorem}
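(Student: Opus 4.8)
The plan is to apply the weighted monotonicity formula of Proposition \ref{prop-monoto}, with its center point and scale tuned to the $k$-th rescaled flow, integrate it in time to obtain a bound uniform in $k$, and then pass to the blow-up limit. Throughout I fix the cutoff $\phi$ centered at the limit point $\tilde X_0=\lim_k F(x_k,t_k)$, and for each $k$ I take in Proposition \ref{prop-monoto} the center $X_0=X_k+L_k^{-1}{\bf a}$ and the time $t_0=t_k$, so that the backward heat kernel $\rho(\cdot,X_0,\cdot,t_k)$ corresponds after rescaling to $(-\tau)^{-n/2}\exp\big(|\tilde X_\infty-{\bf a}|^2/4\tau\big)$ centered at $\tau=0$.

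First I would exploit the signs in (\ref{e-monotonicity}). By hypothesis (\ref{assum1}) the factor $f''-\varepsilon^2 f$ is nonnegative, so the second term on the right of (\ref{e-monotonicity}) is $\leq 0$; dropping the (also nonpositive) first term and rearranging gives
\[
e^{c_1\sqrt{t_k-t}}\int_{\Sigma_t}(f''-\varepsilon^2 f)|{\bf H}|^2\phi\rho\, d\mu_t \leq -\frac{d}{dt}\Big(e^{c_1\sqrt{t_k-t}}\Phi_f\Big)+\frac{c_2}{\varepsilon^2}e^{c_1\sqrt{t_k-t}}\int_{\Sigma_t\cap{\rm supp}\phi}f\, d\mu_t.
\]
Integrating from $t_1$ to $t_k$, discarding the nonnegative contribution $-\lim_{t\to t_k^-}\Phi_f\le 0$, and using $e^{c_1\sqrt{t_k-t}}\geq 1$ on the left, I obtain
\[
\int_{t_1}^{t_k}\!\!\int_{\Sigma_t}(f''-\varepsilon^2 f)|{\bf H}|^2\phi\rho\, d\mu_t\, dt \leq e^{c_1\sqrt{t_k-t_1}}\Phi_f(X_0,t_k,t_1)+\frac{c_2}{\varepsilon^2}\int_{t_1}^{t_k}\!\!e^{c_1\sqrt{t_k-t}}\!\!\int_{\Sigma_t\cap{\rm supp}\phi}f\, d\mu_t\, dt.
\]
Since $t_k-t_1\leq T$, the exponential factors are bounded by $e^{c_1\sqrt{T}}$; and since $\rho(X,X_0,t_1,t_k)\leq(4\pi(t_k-t_1))^{-n/2}$ with $t_k-t_1\geq\frac{1}{2}(T-t_1)$ once $k$ is large (as $t_k\to T$), the boundary term is bounded by $C(T-t_1)^{-n/2}\int_{\Sigma_{t_1}}\phi f\, d\mu_{t_1}$. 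This produces a right-hand side that is a fixed constant times that of (\ref{e-necessary-flow}) and is uniform in $k$.

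Next I would rewrite the left-hand side via the parabolic rescaling $\tilde F_k=L_k(F-X_k)$, $\tau=L_k^2(t-t_k)$. Under it one checks $\rho=L_k^{n}(4\pi(-\tau))^{-n/2}\exp\big(|\tilde X-{\bf a}|^2/4\tau\big)$, $d\mu_t=L_k^{-n}d\tilde\mu^k_\tau$, $|{\bf H}|^2=L_k^{2}|{\bf H}_{\tilde\Sigma^k}|^2$, $dt=L_k^{-2}d\tau$, while $\theta$ and hence $f,f''$ are scale invariant; the powers of $L_k$ cancel ($n+2-n-2=0$). Thus the weighted integrand is scale invariant, and with $\tau_1=L_k^2(t_1-t_k)$ the left-hand side equals
\[
\int_{\tau_1}^{0}\!\!\int_{\tilde\Sigma^k_\tau}(f''-\varepsilon^2 f)|{\bf H}_{\tilde\Sigma^k}|^2\,\tilde\phi_k\,\frac{1}{(4\pi(-\tau))^{n/2}}\exp\Big(\frac{|\tilde X-{\bf a}|^2}{4\tau}\Big)\, d\tilde\mu^k_\tau\, d\tau,
\]
where $\tilde\phi_k(\tilde X)=\phi(X_k+L_k^{-1}\tilde X)$.

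Finally I would pass to the limit $k\to\infty$. On any compact space-time region $K\subset(-\infty,0)\times{\mathbb R}^{2n}$ the convergence $\tilde\Sigma^k_\tau\to\tilde\Sigma^\infty_\tau$ is smooth, $\tilde\phi_k\equiv 1$ for $k$ large (since $X_k+L_k^{-1}\tilde X\to\tilde X_0$ stays in $B_r$), and $\tau_1<\inf_K\tau$; hence the integral over $K$ of the $k$-th integrand converges to that of the limit flow. As the integrand is nonnegative and $K\subset(\tau_1,0)\times\tilde\Sigma^k_\tau$, Fatou's lemma gives $\int_K(\text{limit})\leq\liminf_k(\text{LHS}_k)$, which by the second paragraph is at most the uniform bound. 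Exhausting $(-\infty,0)\times\tilde\Sigma^\infty_\tau$ by such $K$ and taking the supremum yields (\ref{e-necessary-flow}), the harmless factor $(4\pi)^{n/2}$ being absorbed into $c_3$. I expect the main obstacle to be precisely this limit interchange: the convergence is only smooth on compact sets and the kernel is singular at $\tau=0$, so one must argue by monotone exhaustion rather than dominated convergence, and must verify that the Lagrangian angle—and therefore $f(\theta)$ and $f''(\theta)$—converges under the blow-up so that the integrands match in the limit.
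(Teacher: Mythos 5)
Your proposal is correct and follows essentially the same route as the paper: apply Proposition \ref{prop-monoto} with center $X_k+L_k^{-1}{\bf a}$ and time $t_k$, drop the nonpositive terms, integrate in time, bound the boundary contribution by $\rho\le (4\pi(t_k-t_1))^{-n/2}$ together with $t_k-t_1\ge\frac12(T-t_1)$ (this is exactly the paper's ``Claim''), exploit the scale invariance of the weighted integrand under the parabolic rescaling, and pass to the limit by nonnegativity on compact space-time sets. The only cosmetic difference is that the paper first restricts to the window $[t_k-L_k^{-2}R_2,\,t_k-L_k^{-2}R_1]$ and sends $R_1\to0$, $R_2\to\infty$, $R\to\infty$ after $k\to\infty$, whereas you integrate over all of $[t_1,t_k]$ at once and invoke Fatou on a compact exhaustion; these are equivalent.
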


\begin{proof} 
For any $R>0$, we choose a cut-off function $\phi_R\in C^{\infty}_0(B_{2R}(0))$ with $\phi_R=1$ in $B_{R}(0)$, where $B_{\rho}(0)$ is the metric ball of radius $\rho$ centered at $0$ in ${\mathbb R}^{2n}$. Then from $\tilde{X}_k=L_k(X-X_k)$ and $t=L_k^{-2}\tau+t_k$, we have
\begin{eqnarray*}
& &\int_{\tilde{\Sigma}^k_{\tau}}f(\theta_k)\phi_R(\tilde{F}_k)\frac{1}{(-\tau)^{\frac{n}{2}}}\exp\left(\frac{|L_k^{-1}\tilde{X}_k+X_k-X_0|^2}{4L_k^{-2}\tau}\right)d\tilde{\mu}^{k}_\tau\\
&=&\int_{\Sigma_t}f(\theta)\phi(F)\frac{1}{(t_k-t)^{\frac{n}{2}}}\exp\left(-\frac{|X-X_0|^2}{4(t_k-t)}\right)d\mu_t,
\end{eqnarray*}
where $\theta_k(\tilde{F}_k)=\theta(F)$. 

Fixing any point ${\bf a}\in {\mathbb R}^{2n}$, we choose $X_0=X_k+L_k^{-1}{\bf a}$ which still lies in the small neighborhood of  $\tilde{X}_0$ for $k$ sufficiently large. Then we have from (\ref{e-monotonicity}) that for any $0<R_1<R_2$
\begin{eqnarray}\label{e-k-1}
& & e^{c_1L_k^{-1}\sqrt{R_2}}\Phi_f(X_k+L_k^{-1}{\bf a},t_k,t_k-L_k^{-2}R_2)\\
& & -e^{c_1L_k^{-1}\sqrt{R_1}}\Phi_f(X_k+L_k^{-1}{\bf a},t_k,t_k-L_k^{-2}R_1)\nonumber\\
&\geq&   \int_{t_k-L_k^{-2}R_2}^{t_k-L_k^{-2}R_1}e^{c_1\sqrt{t_k-t}}\nonumber\\
& & \cdot\left(\int_{\Sigma_t}f(v)\phi\rho(X,X_k+L_k^{-1}{\bf a},t,t_k)\left|{\bf H}+\frac{(X-X_k-L_k^{-1}{\bf a})^{\perp}}{2(t_k-t)}\right|^2d\mu_t\right.\nonumber\\
& & \left.+ \int_{\Sigma_t}(f''-\varepsilon^2f)|{\bf H}|^2\phi\rho(X,X_k+L_k^{-1}{\bf a},t,t_k)d\mu_t\right)dt\nonumber\\
& & - \int_{t_k-L_k^{-2}R_2}^{t_k-L_k^{-2}R_1}\frac{c_2}{\varepsilon^2}e^{c_1\sqrt{t_k-t}}\int_{\Sigma_t\cap{\rm supp}\phi}fd\mu_tdt\nonumber\\
&\geq&   \int_{t_k-L_k^{-2}R_2}^{t_k-L_k^{-2}R_1}e^{c_1\sqrt{t_k-t}}\nonumber\\
& & \cdot \int_{\Sigma_t}(f''-\varepsilon^2f)|{\bf H}|^2\phi\rho(X,X_k+L_k^{-1}{\bf a},t,t_k)d\mu_tdt\nonumber\\
& & - \int_{t_k-L_k^{-2}R_2}^{t_k-L_k^{-2}R_1}\frac{c_2}{\varepsilon^2}e^{c_1\sqrt{t_k-t}}\int_{\Sigma_t\cap{\rm supp}\phi}fd\mu_tdt\nonumber\\
&\geq& \int_{-R_2}^{-R_1}e^{c_1L_k^{-1}\sqrt{-\tau}}\int_{\tilde{\Sigma}_{\tau}^k}(f''-\varepsilon^2f)|{\bf H}|^2\phi_R(\tilde{F}_k)\frac{1}{(-4\pi \tau)^{\frac{n}{2}}}\exp\left(\frac{|\tilde{X}_k-{\bf a}|^2}{4\tau}\right)d\tilde{\mu}^{k}_\tau d\tau\nonumber\\
& &-L_k^{-2-\frac{n}{2}}\int_{-R_2}^{-R_1}e^{c_1L_k^{-1}\sqrt{-\tau}}\int_{\tilde{\Sigma}_{\tau}^k\cap B_{L_kr}}fd\tilde{\mu}^{k}_\tau d\tau.\nonumber
\end{eqnarray}

\vspace{.1in}

\noindent\textbf{Claim:} For any $t_1<T$, there exists a constant $c_3$ independent of $R_2$ and $k$ such that for $k$ sufficiently large, 
\begin{eqnarray*}
& & e^{c_1L_k^{-1}\sqrt{R_2}}\Phi_f(X_k+L_k^{-1}{\bf a},t_k,t_k-L_k^{-2}R_2)\\
&\leq& \frac{c_3}{(4\pi)^{\frac{n}{2}}}\left(\int_{t_1}^T\int_{\Sigma_t\cap{\rm supp}\phi}fd\mu_tdt+\frac{1}{(T-t_1)^{\frac{n}{2}}}\int_{\Sigma_{t_1}}\phi fd\mu_{t_1}\right).
\end{eqnarray*}

\vspace{.1in}

\noindent\textbf{Proof of the claim:} From (\ref{e-monotonicity}), we see that
\begin{equation*}
\frac{d}{dt}\left(e^{c_1\sqrt{t_k-t}}\Phi_f(X_k+L_k^{-1}{\bf a},t_k,t)\right)\leq \frac{c_2}{\varepsilon^2}e^{c_1\sqrt{t_k-t}}\int_{\Sigma_t\cap{\rm supp}\phi}fd\mu_t.
		\end{equation*}
Therefore, for $k$ sufficiently large such that $t_k-t_1>\frac{1}{2}(T-t_1)$ and $t_k-L_k^{-2}R_2>t_1$ we obtain that
\begin{eqnarray*}
& & e^{c_1L_k^{-1}\sqrt{R_2}}\Phi_f(X_k+L_k^{-1}{\bf a},t_k,t_k-L_k^{-2}R_2)\\
&=& e^{c_1\sqrt{t_k-(t_k-L_k^{-2}R_2)}}\Phi_f(X_k+L_k^{-1}{\bf a},t_k,t_k-L_k^{-2}R_2)\\
&\leq&  e^{c_1\sqrt{t_k-t_1}}\Phi_f\left(X_k+L_k^{-1}{\bf a},t_k,t_1\right)\\
& & +\int_{t_1}^{t_k-L_k^{-2}R_2} \frac{c_2}{\varepsilon^2}e^{c_1\sqrt{t_k-t}}\int_{\Sigma_t\cap{\rm supp}\phi}fd\mu_tdt\\
&\leq& e^{c_1\sqrt{T}}\int_{\Sigma_{t_1}}f(\theta)\phi(F)\frac{1}{[4\pi(t_k-t_1)]^{\frac{n}{2}}}\exp\left(-\frac{|X-(X_k-L_k^{-1}{\bf a})|^2}{4(t_k-t_1)}\right)d\mu_{t_1}\\
& &+\frac{c_2}{\varepsilon^2}e^{c_1\sqrt{T}}\int_{t_1}^{T} \int_{\Sigma_t\cap{\rm supp}\phi}fd\mu_tdt\\
&\leq& e^{c_1\sqrt{T}}\frac{1}{[4\pi(t_k-t_1)]^{\frac{n}{2}}}\int_{\Sigma_{t_1}}\phi fd\mu_{t_1}+\frac{c_2}{\varepsilon^2}e^{c_1\sqrt{T}}\int_{t_1}^{T} \int_{\Sigma_t\cap{\rm supp}\phi}fd\mu_tdt\\
&\leq&\frac{c_2}{\varepsilon^2}e^{c_1\sqrt{T}}\int_{t_1}^T\int_{\Sigma_t\cap{\rm supp}\phi}fd\mu_tdt+\frac{e^{c_1\sqrt{T}}}{(2\pi)^{\frac{n}{2}}(T-t_1)^{\frac{n}{2}}}\int_{\Sigma_{t_1}}\phi fd\mu_{t_1}.
\end{eqnarray*}
The claim then follows from taking $c_3=\left[\frac{e^{c_1\sqrt{T}}}{(2\pi)^{\frac{n}{2}}}+\frac{c_2}{\varepsilon^2}e^{c_1\sqrt{T}}\right](4\pi)^{\frac{n}{2}}$.
\hfill $\square$

\vspace{.1in}

By the above claim, letting $k\to\infty$ in (\ref{e-k-1}) yields for any $0<R_1<R_2$ 
\begin{eqnarray*}
&&\int_{-R_2}^{-R_1}\int_{\tilde{\Sigma}_{\tau}^{\infty}\cap B_R(0)}(f''-\varepsilon^2f)|{\bf H}_{\tilde{\Sigma}_{\tau}^{\infty}}|^2\frac{1}{(-\tau)^{\frac{n}{2}}}\exp\left(\frac{|\tilde{X}_\infty-{\bf a}|^2}{4\tau}\right)d\tilde{\mu}^{\infty}_\tau d\tau\\
&\leq& c_3\left(\int_{t_1}^T\int_{\Sigma_t\cap{\rm supp}\phi}fd\mu_tdt+\frac{1}{(T-t_1)^{\frac{n}{2}}}\int_{\Sigma_{t_1}}\phi fd\mu_{t_1}\right).
\end{eqnarray*}
Letting $R_1\to0$, $R_2\to\infty$ first and $R\to\infty$ next, we see that
\begin{eqnarray*}\label{e-infty}
&&\int_{-\infty}^{0}\int_{\tilde{\Sigma}_{\tau}^{\infty}}(f''-\varepsilon^2f)|{\bf H}_{\tilde{\Sigma}_{\tau}^{\infty}}|^2\frac{1}{(-\tau)^{\frac{n}{2}}}\exp\left(\frac{|\tilde{X}_\infty-{\bf a}|^2}{4\tau}\right)d\tilde{\mu}^{\infty}_\tau d\tau\\
&\leq& c_3\left(\int_{t_1}^T\int_{\Sigma_t\cap{\rm supp}\phi}fd\mu_tdt+\frac{1}{(T-t_1)^{\frac{n}{2}}}\int_{\Sigma_{t_1}}\phi fd\mu_{t_1}\right)
\end{eqnarray*}
holds for any fixed ${\bf a}\in {\mathbb R}^4$.
\end{proof}

	\vspace{.1in}

As a consequence, we can obtain the necessary condition for a translating soliton to be a blow-up limit at a finite time singularity of the Lagrangian mean curvature flow with zero Maslov class:

\begin{corollary}\label{cor-necessary-TS}
If $\Sigma$ is a Lagrangian translating soliton arising as a blow-up limit of a finite time singularity of the Lagrangian mean curvature flow with zero Maslov class, then for any nonnegative function $f\in C^1({\mathbb R})$ with the second derivative piecewise continuous satisfying 
\begin{equation}\label{assum2}
f''\geq \varepsilon^2f
\end{equation}
for some $\varepsilon>0$ and any fixed $t_1<T$, there is a constant $c_4$ depending only on the initial surface $\Sigma_0$, $M$, $\varepsilon$, $T$ and ${\bf T}$, such that for any ${\bf a}\in {\mathbb R}^{2n}$
\begin{eqnarray}\label{e-necessary-TS}
& & \int_{\Sigma}(f''-\varepsilon^2f)\frac{|{\bf H}_{\Sigma}|^2}{|X-{\bf a}|^{\frac{n}{2}-1}}e^{\frac{\langle X-{\bf a}, {\bf T}\rangle}{2}}K_{\frac{n}{2}-1}\left(\frac{|{\bf T}||X-{\bf a}|}{2}\right) d\mu\\
&\leq& c_4\left(\int_{t_1}^T\int_{\Sigma_t\cap{\rm supp}\phi}fd\mu_tdt+\frac{1}{(T-t_1)^{\frac{n}{2}}}\int_{\Sigma_{t_1}}\phi fd\mu_{t_1}\right),\nonumber
\end{eqnarray}
where
\begin{eqnarray*}
K_{\nu}(z):=\frac{1}{2}\left(\frac{z}{2}\right)^{\nu}\int^{\infty}_{0}\frac{1}{t^{\nu+1}}\exp\left(-\frac{z^2}{4t}-t\right) dt
\end{eqnarray*}
is the modified Bessel function of the second kind. 
\end{corollary}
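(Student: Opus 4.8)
The plan is to specialize Theorem~\ref{thm-necessary-LMCF} to the situation in which the eternal blow-up limit $\tilde{\Sigma}^\infty_\tau$ is precisely the translating soliton $\Sigma$. By the definition of a translating soliton, the eternal flow is the rigid translation $\tilde{\Sigma}^\infty_\tau=\Sigma+\tau{\bf T}$, so that all intrinsic and extrinsic quantities are merely carried along by this translation: if $X$ denotes the position of a point of $\Sigma$, then the corresponding point of $\tilde{\Sigma}^\infty_\tau$ has position $\tilde{X}_\infty=X+\tau{\bf T}$, while $f(\theta)$, $|{\bf H}_\Sigma|^2$ and the area element $d\tilde{\mu}^\infty_\tau=d\mu$ are all independent of $\tau$. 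The task is therefore to carry out the $\tau$-integration in~(\ref{e-necessary-flow}) explicitly, which will produce exactly the spatial weight involving the modified Bessel function $K_{\frac{n}{2}-1}$ appearing in~(\ref{e-necessary-TS}).

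Concretely, first I would substitute $\tilde{X}_\infty-{\bf a}=(X-{\bf a})+\tau{\bf T}$ into the Gaussian factor and expand
\begin{equation*}
\frac{|\tilde{X}_\infty-{\bf a}|^2}{4\tau}=\frac{|X-{\bf a}|^2}{4\tau}+\frac{\langle X-{\bf a},{\bf T}\rangle}{2}+\frac{\tau|{\bf T}|^2}{4},
\end{equation*}
so that the exponential factors into a $\tau$-independent term $\exp(\langle X-{\bf a},{\bf T}\rangle/2)$ and the two remaining $\tau$-dependent terms. Since the integrand is nonnegative (by the hypothesis $f''\geq\varepsilon^2 f$ together with $|{\bf H}_\Sigma|^2\geq0$), Tonelli's theorem permits interchanging the order of integration, so I would integrate in $\tau$ first. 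With the substitution $\tau=-s$ followed by $t=s|{\bf T}|^2/4$, the inner integral becomes, up to the pulled-out factor $\exp(\langle X-{\bf a},{\bf T}\rangle/2)$,
\begin{equation*}
\int_{-\infty}^0\frac{1}{(-\tau)^{\frac{n}{2}}}\exp\left(\frac{|X-{\bf a}|^2}{4\tau}+\frac{\tau|{\bf T}|^2}{4}\right)d\tau
=\int_0^\infty\frac{1}{s^{\frac{n}{2}}}\exp\left(-\frac{|X-{\bf a}|^2}{4s}-\frac{s|{\bf T}|^2}{4}\right)ds,
\end{equation*}
which is exactly of the form defining $K_\nu$ with $\nu=\frac{n}{2}-1$ and $z=\frac{|{\bf T}||X-{\bf a}|}{2}$. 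Reading off the definition of $K_\nu$ then expresses the inner integral as a constant multiple (depending only on $n$ and $|{\bf T}|$) of $|X-{\bf a}|^{-(\frac{n}{2}-1)}K_{\frac{n}{2}-1}(|{\bf T}||X-{\bf a}|/2)$. Substituting this back reproduces the left-hand side of~(\ref{e-necessary-TS}), with the emerging numerical and $|{\bf T}|$-dependent constants absorbed into $c_4$ (which is why $c_4$ is allowed to depend on ${\bf T}$), while the right-hand side is inherited unchanged from~(\ref{e-necessary-flow}).

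The computation is essentially routine once the factorization and the Fubini--Tonelli step are in place; the only points requiring care are the justification of the interchange of integration (handled by nonnegativity of the integrand, which is precisely where the hypothesis $f''\geq\varepsilon^2 f$ is used) and the careful bookkeeping of constants through the two substitutions, so that the index $\nu=\frac{n}{2}-1$ and the argument $\frac{|{\bf T}||X-{\bf a}|}{2}$ of the Bessel function come out correctly. I do not anticipate any genuine obstacle: all of the analytic content is already contained in Theorem~\ref{thm-necessary-LMCF}, and this corollary is simply the explicit evaluation of the time integral for a rigidly translating eternal solution.
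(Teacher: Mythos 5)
Your proposal is correct and follows essentially the same route as the paper: substitute $\tilde{X}_\infty={X}+\tau{\bf T}$ into (\ref{e-necessary-flow}), expand the quadratic so the exponential factors as $\exp\left(-\frac{|X-{\bf a}|^2}{4\tau}-\frac{|{\bf T}|^2\tau}{4}+\frac{\langle X-{\bf a},{\bf T}\rangle}{2}\right)$, swap the order of integration, and recognize the $\tau$-integral as $2|{\bf T}|^{\frac{n}{2}-1}|X-{\bf a}|^{1-\frac{n}{2}}K_{\frac{n}{2}-1}\left(\frac{|{\bf T}||X-{\bf a}|}{2}\right)$. Your explicit appeal to Tonelli (via nonnegativity from $f''\geq\varepsilon^2 f$) is a detail the paper leaves implicit, but the argument is otherwise identical.
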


\begin{proof} 
Since $\Sigma$ is a translating soliton, we have $\tilde{\Sigma}^{\infty}_{\tau}=\Sigma+\tau {\bf T}$. Using the fact that $|{\bf H}_{\tilde{\Sigma}_{\tau}^{\infty}}|^2(X+\tau {\bf T})=|{\bf H}_{\Sigma}|^2(X)$, we have by Theorem \ref{thm-necessary-LMCF} that
\begin{eqnarray*}\label{e-k}
& & c_3\left(\int_{t_1}^T\int_{\Sigma_t\cap{\rm supp}\phi}fd\mu_tdt+\frac{1}{(T-t_1)^{\frac{n}{2}}}\int_{\Sigma_{t_1}}\phi fd\mu_{t_1}\right)\\
&\geq & \int_{-\infty}^{0}\int_{\tilde{\Sigma}_{\tau}^{\infty}}(f''-\varepsilon^2f)|{\bf H}_{\tilde{\Sigma}_{\tau}^{\infty}}|^2\frac{1}{(-\tau)^{\frac{n}{2}}}\exp\left(\frac{|\tilde{X}-{\bf a}|^2}{4\tau}\right)d\tilde{\mu}^{\infty}_\tau d\tau\nonumber\\
&=&  \int_{-\infty}^{0}\int_{\Sigma}(f''-\varepsilon^2f)|{\bf H}_{\Sigma}|^2\frac{1}{(-\tau)^{\frac{n}{2}}}\exp\left(\frac{|X+\tau {\bf T}-{\bf a}|^2}{4\tau}\right)d\mu d\tau\nonumber\\
&=&   \int^{\infty}_{0}\int_{\Sigma}(f''-\varepsilon^2f)|{\bf H}_{\Sigma}|^2 \cdot\frac{1}{\tau^{\frac{n}{2}}}\exp\left(-\frac{|X-\tau {\bf T}-{\bf a}|^2}{4\tau}\right)d\mu d\tau\nonumber\\
&=&   \int^{\infty}_{0}\int_{\Sigma}(f''-\varepsilon^2f)|{\bf H}_{\Sigma}|^2\frac{1}{\tau^{\frac{n}{2}}}\exp\left(-\frac{|X-{\bf a}|^2}{4\tau}-\frac{|{\bf T}|^2\tau}{4}+\frac{\langle X-{\bf a}, {\bf T}\rangle}{2}\right)d\mu d\tau\nonumber\\
&=&  \int_{\Sigma}(f''-\varepsilon^2f)|{\bf H}_{\Sigma}|^2e^{\frac{\langle X-{\bf a}, {\bf T}\rangle}{2}}\int^{\infty}_{0}\frac{1}{\tau^{\frac{n}{2}}}\exp\left(-\frac{|X-{\bf a}|^2}{4\tau}-\frac{|{\bf T}|^2\tau}{4}\right) d\tau d\mu\nonumber\\
&=&  2|{\bf T}|^{\frac{n}{2}-1}\int_{\Sigma}(f''-\varepsilon^2f)\frac{|{\bf H}_{\Sigma}|^2}{|X-{\bf a}|^{\frac{n}{2}-1}}e^{\frac{\langle X-{\bf a}, {\bf T}\rangle}{2}}K_{\frac{n}{2}-1}\left(\frac{|{\bf T}||X-{\bf a}|}{2}\right) d\mu,
\end{eqnarray*}
where
\begin{eqnarray*}
K_{\nu}(z):=\frac{1}{2}\left(\frac{z}{2}\right)^{\nu}\int^{\infty}_{0}\frac{1}{t^{\nu+1}}\exp\left(-\frac{z^2}{4t}-t\right) dt
\end{eqnarray*}
is the modified Bessel function of the second kind. This completes the proof of the theorem.
\end{proof}

	\vspace{.2in}

\section{Proof of the Main Theorems}

	\vspace{.1in}

In this section, we will first prove that a special function is integrable along the mean curvature flow by using the volume estimate by Jiang, Li and Wang  (\cite{JLW}). We then show that the example constructed by Joyce, Lee and Tsui (\cite{JLT}) cannot arise as blow up limit under some natural assumptions. This partially answers the open question proposed by Joyce-Lee-Tsui and Neves-Tian (see Scetion 1). The following theorem is a consequence of Theorem 1.8 of \cite{JLW}.

\begin{theorem}\label{thm-0}
Let $\{\Sigma_t\}_{0\leq t<T}$ be the Lagrangian mean curvature flow with zero Maslov class. Denote by $\underline{\theta}$ a fixed constant. Then for any $t\in [0,T)$, the following inequality holds
\begin{equation}\label{e-log}
\int_{\Sigma_t}\log(A-\log|\theta-\underline{\theta}|)d\mu_t\leq \Lambda(t)
\end{equation}
where $A$ is a positive constant such that $A-\log|\theta-\underline{\theta}|>0$. Here $\Lambda$ is a constant depending on $A$, $\Sigma$ and $t$.
\end{theorem}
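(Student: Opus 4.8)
The plan is to convert the statement into a volume estimate for the sublevel sets of $|\theta-\underline\theta|$ and then to integrate by the layer-cake (distribution function) method. First I would record two elementary reductions. Since $\theta$ solves the heat equation \eqref{e-LMCF-theta}, the parabolic maximum principle shows that the oscillation of $\theta$ on $\Sigma_t$ is non-increasing, so $|\theta-\underline\theta|\le C_0$ uniformly in $t$; hence $A-\log|\theta-\underline\theta|\ge A-\log C_0$ is bounded below by a positive constant (this is exactly the requirement imposed on $A$), and the integrand $g:=\log(A-\log|\theta-\underline\theta|)$ is bounded below on $\Sigma_t$. Because $\Sigma_t$ is closed, its negative part contributes at most $\|g^-\|_{\infty}\,\mathrm{Area}(\Sigma_t)<\infty$, so only the upper tail of $g$ requires work.

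Next I would apply the layer-cake formula to the positive part,
\[
\int_{\Sigma_t}g^+\,d\mu_t=\int_0^{\infty}\mu_t\big(\{g>s\}\big)\,ds,
\]
and note that $\{g>s\}=\{\,|\theta-\underline\theta|<e^{A-e^{s}}\,\}$. Thus everything reduces to how fast $\mu_t(\{|\theta-\underline\theta|<\epsilon\})$ decays as $\epsilon\to 0$, which is precisely the content supplied by Theorem 1.8 of \cite{JLW}: it yields a volume bound of the form $\mu_t(\{|\theta-\underline\theta|<\epsilon\})\le \Lambda_0(t)\,\omega(\epsilon)$ with $\omega(\epsilon)\to 0$, say at least like a negative power of $\log(1/\epsilon)$. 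Substituting $\epsilon=e^{A-e^{s}}$, for which $\log(1/\epsilon)=e^{s}-A$, turns the $s$-integral into one dominated by $\int^{\infty}e^{-ps}\,ds<\infty$. The decisive numerical point is that $\epsilon=e^{A-e^{s}}$ is \emph{double-exponentially} small in $s$, so even an extremely weak decay of the sublevel volume is far more than enough; I would split the $s$-range at a finite value, using the trivial bound $\mu_t(\{|\theta-\underline\theta|<\epsilon\})\le\mathrm{Area}(\Sigma_t)$ on the bounded part where the cited estimate is not yet effective, and the $\cite{JLW}$ decay on the tail.

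Collecting the two contributions gives $\int_{\Sigma_t}g\,d\mu_t\le\Lambda(t)$, where $\Lambda(t)$ depends only on $A$, on $\mathrm{Area}(\Sigma_t)$, and on the constant $\Lambda_0(t)$ produced by \cite{JLW}, matching the claimed dependence on $A$, $\Sigma$ and $t$.

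I expect the main obstacle to be the faithful application of Theorem 1.8 of \cite{JLW}: one must identify the constant $\underline\theta$ with the appropriate value of the Lagrangian angle, verify that the hypotheses of that volume estimate hold along a zero-Maslov Lagrangian mean curvature flow, and extract the explicit $t$-dependence of $\Lambda_0(t)$, which is allowed to degenerate as $t\to T$ (consistent with the separate integrated hypothesis \eqref{e-main-assum-2-0} that Theorem \ref{thm-1} must still assume). Once the sublevel-volume bound is in hand, the scale-matching described above is routine.
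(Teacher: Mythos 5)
Your proposal is correct and follows essentially the same route as the paper: both reduce the estimate to a volume bound for the sublevel sets $\{|\theta-\underline{\theta}|<\epsilon\}$ supplied by Theorem 1.8 of \cite{JLW}, your layer-cake formula being just the integrated-by-parts form of the paper's co-area computation. The only difference is that you hedge on the decay rate from \cite{JLW} (guessing a negative power of $\log(1/\epsilon)$), whereas the paper uses the actual bound $C\epsilon^{\varepsilon}$; as you correctly observe, either suffices.
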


\begin{proof}
Denote by $\theta_{max}:= \sup_{\Sigma_t}\theta$ and $\theta_{min}:= \inf_{\Sigma_t}\theta$. 
Then we have using the co-area formula and Theorem 1.8 of \cite{JLW} that
\begin{eqnarray*}
&&\int_{\Sigma_t}\log(A-\log|\theta-\underline{\theta}|)d\mu_t\\
&=&\int_{\Sigma_t}\log(A-\log((\theta-\underline{\theta})_++(\underline{\theta}-\theta)_+)d\mu_t\\
&=& \int_{\Sigma_t\cap\{\theta-\underline{\theta}\geq 0\}}\log(A-\log(\theta-\underline{\theta}))d\mu_t+\int_{\Sigma_t\cap\{\theta-\underline{\theta}\leq 0\}}\log(A-\log(\underline{\theta}-\theta))d\mu_t\\
&=&\int_{0}^{\theta_{max}-\underline{\theta}}\int_{\Sigma_t\cap\{\theta-\underline{\theta}=\alpha\}}\frac{\log(A-\log(\theta-\underline{\theta}))}{|\nabla\theta|}d\sigma_td\alpha \\
& & +\int^{0}_{\theta_{min}-\underline{\theta}}\int_{\Sigma_t\cap\{\theta-\underline{\theta}=\alpha\}}\frac{\log(A-\log(\underline{\theta}-\theta))}{|\nabla\theta|}d\sigma_td\alpha \\
&=& \int_{0}^{\theta_{max}-\underline{\theta}}\log(A-\log\alpha)d\left(\int_{\Sigma_t\cap\{0<\theta-\underline{\theta}<\alpha\}}d\mu_t\right)\\
& & +\int_{0}^{\underline{\theta}-\theta_{min}}\log(A-\log\alpha)d\left(\int_{\Sigma_t\cap\{0<\underline{\theta}-\theta<\alpha\}}d\mu_t\right)\\
&\leq & \log(A-\log(\theta_{max}-\underline{\theta}))\int_{\Sigma_t\cap\{0<\theta-\underline{\theta}<\theta_{max}-\underline{\theta}\}}d\mu_t\\
& & +\int_{0}^{\theta_{max}-\underline{\theta}}\left(\int_{\Sigma_t\cap\{0<\theta-\underline{\theta}<\alpha\}}d\mu_t\right)\frac{1}{\alpha(A-\log\alpha)}d\alpha\\
& & +\log(A-\log(\underline{\theta}-\theta_{min}))\int_{\Sigma_t\cap\{0<\underline{\theta}-\theta<\underline{\theta}-\theta_{min}\}}d\mu_t\\
& & +\int_{0}^{\underline{\theta}-\theta_{min}}\left(\int_{\Sigma_t\cap\{0<\underline{\theta}-\theta<\alpha\}}d\mu_t\right)\frac{1}{\alpha(A-\log\alpha)}d\alpha\\
&\leq& [\log(A-\log(\theta_{max}-\underline{\theta}))+\log(A-\log(\underline{\theta}-\theta_{min}))]{\rm Area}(\Sigma_t)\\
& &+\int_{0}^{\theta_{max}-\underline{\theta}}\frac{C\alpha^{\varepsilon}}{\alpha(A-\log\alpha)}d\alpha+\int_{0}^{\underline{\theta}-\theta_{min}}\frac{C\alpha^{\varepsilon}}{\alpha(A-\log\alpha)}d\alpha \\
&\leq & C(A,\theta_{max}-\underline{\theta},\underline{\theta}-\theta_{min},n,\Sigma_t,\theta) <\infty
\end{eqnarray*}
where $C$ and $\varepsilon$ are positive constants depending on $\Sigma_t$, $n$ and $\theta$. The last inequality follows from Theorem 1.8 in \cite{JLW}.
\end{proof}

\vspace{.1in}

For a solution $\{\Sigma_t\}_{t\in [0,T)}$ to the Lagrangian mean curvature flow with zero Maslov class in a compact Calabi-Yau manifold, we denote  $\theta$ as the Lagrangian angle of $\Sigma_t$. We also denote $\underline{\theta}(t)$ and $\overline{\theta}(t)$ as the minimum and the maximum of $\theta$ on $\Sigma_t$ respectively. Then $\underline{\theta}(t)$ and $\overline{\theta}(t)$ are Lipschitz and we have from (\ref{e-LMCF-theta}) that
		\begin{equation*}
			\frac{d}{dt}\underline{\theta}(t)\geq 0, \ \ \frac{d}{dt}\overline{\theta}(t)\leq 0.
		\end{equation*}
In particular, we have
		\begin{equation*}
\overline{\theta}(t)-\underline{\theta}(t)\leq\overline{\theta}(0)-\underline{\theta}(0):=D.
		\end{equation*}
Without loss of generality, we may assume that $D\leq 1$. On the Lagrangian translating soliton $\Sigma$, we will also denote  $\theta$ as the Lagrangian angle of the translating soliton $\Sigma$, $\underline\theta$ as its minimum and $\overline{\theta}$ as its  maximum. Since $\theta$ is uniformly bounded along the flow, we may assume that along the flow and consequently on the translating soliton we have
		\begin{equation*}
-D\leq \theta-\underline{\theta}\leq D.
		\end{equation*}

	\vspace{.1in}

Let $X_0\in M$ be the blow up point of the Lagrangian mean curvature flow with zero Maslov class at the first singular time $T$ and $\Sigma$ be a translating soliton arising as blow-up limit of the flow at $(X_0,T)$. Denote $\underline{\theta}$ and $\overline{\theta}$ the infimum and supremum of the Lagrangian angle $\theta$ on $\Sigma$, respectively. Then we have

\begin{equation}\label{e-blowup-2} 
\underline{\theta}=\lim_{r\to0}\lim_{t\to T}\inf_{s\in [t,T)}\inf_{B_r(X_0)\cap \Sigma_s}\theta.
\end{equation}
Now we will prove the main theorem in this paper.

	\vspace{.1in}

\begin{theorem}\label{Thm-1-2}
Let $\{\Sigma_t\}_{t\in [0,T)}$ be a smooth solution to the Lagrangian mean curvature flow with zero Maslov class and develop singularity at finite time $T$. Assume that there exist positive constants $r_0$ and $t_1<T$, such that
\begin{equation}\label{e-main-assum} 
\underline{\theta}=\inf_{s\in [t_1,T)}\inf_{B_{r_0}(X_0)\cap \Sigma_s}\theta,
\end{equation} 
and
\begin{equation}\label{e-main-assum-2} 
\int_{t_1}^T\int_{\Sigma_t}\log(A-\log|\theta-\underline{\theta}|)d\mu_tdt:=c_5<\infty,
\end{equation}
where $A$ is a positive constant such that $A-\log|\theta-\underline{\theta}|>0$, then the translating soliton $\Sigma$ constructed by Joyce-Lee-Tsui in Proposition \ref{prop-JLT} cannot be a blow-up limit at $(X_0,T)$ of the flow.
\end{theorem}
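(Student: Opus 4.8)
The plan is to argue by contradiction from the necessary condition of Corollary~\ref{cor-necessary-TS}. Suppose the Joyce--Lee--Tsui soliton $\Sigma$ of Proposition~\ref{prop-JLT} does arise as a blow-up limit at $(X_0,T)$; after normalizing ${\bf T}=(0,\dots,0,\alpha,0)$ with $\alpha=1$ and choosing ${\bf a}=0$, the inequality (\ref{e-necessary-TS}) must hold for every admissible $f$. I would test it against a family $f_m$ $(m\ge m_0)$ manufactured from the integrand in assumption (\ref{e-main-assum-2}), namely $g(\theta):=\log(A-\log|\theta-\underline\theta|)$, and then show that the left side of (\ref{e-necessary-TS}) diverges as $m\to\infty$ while the right side remains bounded. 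I take the cutoff $\phi$ supported in $B_{r_0}(X_0)$, so that by assumption (\ref{e-main-assum}) one has $v:=\theta-\underline\theta\ge0$ on $\Sigma_s\cap{\rm supp}\,\phi$ for $s\in[t_1,T)$ and on $\Sigma$.

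\emph{Test functions.} Writing $w=A-\log v$ so that $g=\log w$, one computes $g'=-(vw)^{-1}$ and $g''=(w-1)v^{-2}w^{-2}>0$. I would set $f_m:=g$ on $\{v\ge 1/m\}$ and extend it below $v=1/m$ by its second-order Taylor polynomial $p$ at $v=1/m$, so that $f_m\in C^1$, $f_m''$ is piecewise continuous, and $f_m''\equiv g''(1/m)$ for $v<1/m$. Two properties must be verified. First, $f_m''\ge\varepsilon^2f_m$ with $\varepsilon>0$ \emph{independent of} $m$: for $v\ge1/m$ the ratio $g''/g=(w-1)e^{2w}/(e^{2A}w^2\log w)$ tends to $+\infty$ as $v\to0$ and is bounded below on $[1/m,D]$ by its value at $v=D$, while for $v<1/m$ the ratio $f_m''/f_m$ is bounded below in terms of $D\le1$ only. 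Second, and crucially, $0\le f_m\le g$ on $\{v\ge0\}$: there is equality for $v\ge1/m$, and for $v<1/m$ I would use $g'''=v^{-3}w^{-3}(-2w^2+3w-2)<0$ (the quadratic has negative discriminant), so $g''$ decreases in $v$, giving $g''\ge g''(1/m)=p''$ on $[0,1/m]$; combined with $(g-p)(1/m)=(g-p)'(1/m)=0$ this forces $g-p\ge0$ there.

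\emph{Right side.} Since $f_m\le g=\log(A-\log|\theta-\underline\theta|)$ pointwise on ${\rm supp}\,\phi$, assumption (\ref{e-main-assum-2}) bounds $\int_{t_1}^T\int_{\Sigma_t\cap{\rm supp}\,\phi}f_m\,d\mu_t\,dt\le c_5$, and Theorem~\ref{thm-0} bounds the boundary term $\int_{\Sigma_{t_1}}\phi f_m\,d\mu_{t_1}\le\Lambda(t_1)$. Hence the right side of (\ref{e-necessary-TS}) is $\le c_6$ for a constant $c_6$ \emph{independent of} $m$.

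\emph{Left side --- the main obstacle.} This is the heart of the argument and where the explicit geometry of Section~2 must be pushed through. Restricting the nonnegative integrand to $\{v\ge1/m\}$, i.e. to $1\le y\le y_m$ with $y_m\sim\sqrt{2\log m}$, I would insert the large-$y$ asymptotics: integrating (\ref{e-evo-theta}) gives $v(y)\sim(\sqrt a\,y)^{-1}e^{-y^2/2}$, whence $w\sim y^2/2$ and, since $f_m\ll f_m''$ here, $f_m''-\varepsilon^2f_m\ge\tfrac12 f_m''\sim a\,e^{y^2}$; moreover $|{\bf H}_\Sigma|^2\sim\big(a\,e^{y^2}(x^2+y^2)\big)^{-1}$ by (\ref{e-H}) and $d\mu\sim(x^2+y^2)\,dx\,dy$ by (\ref{e-dmu}). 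Using the Bessel asymptotic $K_0(z)\sim\sqrt{\pi/2z}\,e^{-z}$ together with $e^{\langle X,{\bf T}\rangle/2}=e^{(y^2-x^2)/4}$ and $|X|\sim(x^2+y^2)/2$, the weight $e^{\langle X,{\bf T}\rangle/2}K_0(|X|/2)$ behaves like $c\,(x^2+y^2)^{-1/2}e^{-x^2/2}$. All exponentials in $y$ then cancel, leaving an integrand comparable to $(x^2+y^2)^{-1/2}e^{-x^2/2}\,dx\,dy$; integrating in $x$ yields $\sim y^{-1}\,dy$, and integrating $y$ over $[1,y_m]$ gives a lower bound $\gtrsim\log y_m\sim\tfrac12\log\log m\to\infty$. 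This contradicts the uniform bound $c_6$ on the right side, proving the theorem for $n=2$. For $n\ge2$ I would run the same scheme with $v(y)\sim C\,y^{1-n}e^{-\alpha y^2/2}$ and $K_{\frac n2-1}$ in place of $K_0$; the technical burden is to track the $n$-dependent powers of $y$ through the cancellation of exponentials and to confirm that the residual (borderline) integral still diverges.
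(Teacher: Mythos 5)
Your proposal is correct and follows essentially the same route as the paper: both test the necessary condition of Corollary~\ref{cor-necessary-TS} (with ${\bf a}=0$, $\alpha=1$) against a regularization of $\log(A-\log(\theta-\underline\theta))$, bound the right-hand side using hypothesis (\ref{e-main-assum-2}) together with Theorem~\ref{thm-0}, and force divergence of the left-hand side from the asymptotics $v(y)\sim C y^{1-n}e^{-y^2/2}$ and the lower bound $\sqrt{z}e^zK_{\nu}(z)\geq c>0$. The only difference is the choice of regularization --- you truncate at $v=1/m$ and extend by the second-order Taylor polynomial, while the paper shifts to $f_{\delta}(v)=\log(A-\log(v+\delta))$ and lets $\delta\to0$ --- and both yield the same doubly-logarithmic divergence on the left.
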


 \begin{proof} We prove the theorem by contradiction. Assume that the translating soliton $\Sigma$ constructed by Joyce-Lee-Tsui in Proposition \ref{prop-JLT} is a blow-up limit at $(X_0,T)$ of the flow.
 Without loss of generality, we may choose ${\bf T}=(0,0,\cdots,0,0,1,0)$, i.e.  $\alpha=1$. We will choose the cutoff function $\phi$ in Section 3 such that ${\rm supp}\phi\subset B_{r_0}(X_0)$. 
 By the assumption \ref{e-main-assum}, we see that for any $\delta\in (0,1)$ independent of $r_0$ and $t_1$, we have on $\Sigma_t\cap{\rm supp}\phi$ for $t\in [t_1,T)$ that
\begin{equation*}
    \theta>\underline{\theta}-\delta.
\end{equation*}
Set $v=\theta-\underline{\theta}$ and we define the function $f_{\delta}(v)$ by
\begin{equation}\label{E-f-n}
    f_{\delta}(v)=\log(A-\log(v+\delta)),
\end{equation}
where $A:=\log(D+1)+2$ is a positive constant.
Then $f_{\delta}$ is a smooth positive function on $\Sigma$, and $\phi f_{\delta}$ is a smooth function on $\Sigma_t$ for $t\in [t_1,T)$. We compute
\begin{eqnarray*}\label{E-f-n-1st}
f_{\delta}'(v)=
\frac{-1}{[A-\log(v+\delta)](v+\delta)},
\end{eqnarray*}
and
\begin{eqnarray}\label{E-f-n-2nd}
f_{\delta}''(v)=
\frac{1}{[A-\log(v+\delta)](v+\delta)^2}-\frac{1}{[A-\log(v+\delta)]^2(v+\delta)^2}.
\end{eqnarray}
By the choice of $A$, it is easy to see that the assumptions of Corollary \ref{cor-necessary-TS} are satisfied for some $\varepsilon$ depending on $D$. Therefore, by reducing $\varepsilon$ if necessary, we have the following inequality (by taking ${\bf a}={\bf 0}$):
\begin{eqnarray}\label{E-necessary-TS-2}
& & \int_{\Sigma}f_{\delta}''\frac{|{\bf H}_{\Sigma}|^2}{|X|^{\frac{n}{2}-1}}e^{\frac{\langle X, {\bf T}\rangle}{2}}K_{\frac{n}{2}-1}\left(\frac{|{\bf T}||X-{\bf a}|}{2}\right) d\mu\\
&\leq& c_6\left(\int_{t_1}^T\int_{\Sigma_t\cap{\rm supp}\phi}f_{\delta}d\mu_tdt+\frac{1}{(T-t_1)^{\frac{n}{2}}}\int_{\Sigma_{t_1}}\phi f_{\delta}d\mu_{t_1}\right).\nonumber
\end{eqnarray}

For the right hand side of (\ref{E-necessary-TS-2}), Theorem \ref{thm-0} and the assumption (\ref{e-main-assum-2}) implies that
\begin{eqnarray}\label{E-RHS}
\int_{t_1}^T\int_{\Sigma_t\cap{\rm supp}\phi}f_{\delta}d\mu_tdt+\frac{1}{(T-t_1)^{\frac{n}{2}}}\int_{\Sigma_{t_1}}\phi f_{\delta}d\mu_{t_1}\leq c_7
\end{eqnarray}
for some constant $c_7$ depending on $c_5$ and $t_1$, but independent of $\delta$.

In fact, without using Theorem 5.1, applying the assumption (\ref{e-main-assum-2}), we can see that (\ref{E-RHS}) holds for some $t_1$ in $(0,T)$.

We next estimate the left hand side of (\ref{E-necessary-TS-2}). By the choice of $A$, we see that
\begin{eqnarray}\label{E-f-n-2nd-2}
f_{\delta}''(v)\geq\frac{1}{2[A-\log(v+\delta)](v+\delta)^2}.
\end{eqnarray}
It remains to compute the quantity
\begin{eqnarray*}
\frac{|{\bf H}_{\Sigma}|^2}{|X|^{\frac{n}{2}-1}}e^{\frac{\langle X, {\bf T}\rangle}{2}}K_{\frac{n}{2}-1}\left(\frac{|{\bf T}||X-{\bf a}|}{2}\right)d\mu.
\end{eqnarray*}

\vspace{.1in}

From (\ref{e-FF}), it follows that
\begin{eqnarray}\label{E-X}
|X|^2=\left(\frac{ \sum_{j=1}^{n-1}x_j^2+y^2}{2}\right)^2+ \sum_{j=1}^{n-1}\frac{x_j^2}{a_j}+\left( \sum_{j=1}^{n-1}\phi_j(y)+\gamma(y)\right)^2.
\end{eqnarray}
Let $\theta$ be the Lagrangian angle of the translating soliton $\Sigma$ and $\underline\theta$ its infimum. Then it is known from Section 2 that for $y>0$
\begin{eqnarray*}
&&\theta(y)= \sum_{j=1}^{n-1}\phi_j(y)+\gamma(y)\\
&=&\sum_{j=1}^{n-1}\int_0^y\frac{tdt}{\left(\frac{1}{a_j}+t^2\right)\sqrt{ \prod_{i=1}^{n-1}(1+a_it^2)\cdot e^{\alpha t^2}-1}}+\arctan\frac{1}{\sqrt{ \prod_{i=1}^{n-1}(1+a_iy^2)\cdot e^{\alpha y^2}-1}},
\end{eqnarray*}
and
\begin{equation*}
\underline\theta=\lim_{y\to\infty}\theta(y)=\sum_{j=1}^{n-1}\bar\phi_j=\sum_{j=1}^{n-1}\int_0^{\infty}\frac{tdt}{\left(\frac{1}{a_j}+t^2\right)\sqrt{ \prod_{i=1}^{n-1}(1+a_it^2)\cdot e^{\alpha t^2}-1}}>0.
\end{equation*}
So,
\begin{eqnarray*}
&&v(y)=\theta(y)-\underline\theta\\
&=&\arctan\frac{1}{\sqrt{ \prod_{i=1}^{n-1}(1+a_iy^2)\cdot e^{\alpha y^2}-1}}-\sum_{j=1}^{n-1}\int_y^{\infty}\frac{tdt}{\left(\frac{1}{a_j}+t^2\right)\sqrt{ \prod_{i=1}^{n-1}(1+a_it^2)\cdot e^{\alpha t^2}-1}}.
\end{eqnarray*}
We therefore have $\lim\limits_{y\to\infty}v(y)=0$,
\begin{equation*}
\frac{dv}{dy}=-\frac{y}{\sqrt{ \prod_{j=1}^{n-1}(1+a_jy^2)\cdot e^{y^2}-1}},
\end{equation*}
and for $y>0$
\begin{equation}\label{E-v}
v(y)=\int_y^{\infty}\frac{t}{\sqrt{ \prod_{j=1}^{n-1}(1+a_jt^2)\cdot e^{t^2}-1}}dt.
\end{equation}

\vspace{.1in}

\begin{lemma}\label{Lem-theta}
We have for $y\geq 1$
\begin{equation}\label{E-v-ul}
\frac{1}{n\sqrt{ \prod_{j=1}^{n-1}(1+a_j)}}y^{1-n}e^{-\frac{y^2}{2}}\leq v(y)\leq \frac{1}{\sqrt{ \prod_{j=1}^{n-1}a_j}}y^{1-n}e^{-\frac{y^2}{2}}.
\end{equation}
\end{lemma}

\begin{proof}
By (\ref{E-v}), we compute by integration by parts that
\begin{eqnarray*}
& & v(y)\\
&=&\int_y^{\infty}\frac{t}{\sqrt{ \prod_{j=1}^{n-1}(1+a_jt^2)\cdot e^{t^2}-1}}dt
=\frac{1}{2}\int_{y^2}^{\infty}\frac{1}{\sqrt{ \prod_{j=1}^{n-1}(1+a_jt)\cdot e^{t}-1}}dt\\
&=&\frac{1}{2}\int_{y^2}^{\infty}\frac{e^{-\frac{t}{2}}}{\sqrt{ \prod_{j=1}^{n-1}(1+a_jt)-e^{-t}}}dt=-\int_{y^2}^{\infty}\frac{1}{\sqrt{ \prod_{j=1}^{n-1}(1+a_jt)-e^{-t}}}de^{-\frac{t}{2}}\\
&=&\frac{e^{-\frac{y^2}{2}}}{\sqrt{ \prod_{j=1}^{n-1}(1+a_jy^2)-e^{-y^2}}}-\frac{1}{2}\int_{y^2}^{\infty}\frac{e^{-\frac{t}{2}}\left( \sum_{i=1}^{n-1}a_i \prod_{j=1,j\neq i}^{n-1}(1+a_jt)+e^{-t}\right)}{\left( \prod_{j=1}^{n-1}(1+a_jt)-e^{-t}\right)^{\frac{3}{2}}}dt\\
&\leq&\frac{e^{-\frac{y^2}{2}}}{\sqrt{ \prod_{j=1}^{n-1}(1+a_jy^2)-e^{-y^2}}}
\leq \frac{1}{\sqrt{ \prod_{j=1}^{n-1}(1+a_j)}}y^{1-n}e^{-\frac{y^2}{2}}.
\end{eqnarray*}
To prove the lower bound, we compute for $y\geq 1$
\begin{eqnarray*}
v(y)
&=&\int_y^{\infty}\frac{t}{\sqrt{ \prod_{j=1}^{n-1}(1+a_jt^2)\cdot e^{t^2}-1}}dt\geq \int_y^{\infty}\frac{t}{\sqrt{ \prod_{j=1}^{n-1}(1+a_j)\cdot t^{2n-2}\cdot e^{t^2}}}dt\\
&=& \frac{1}{2}\left[ \prod_{j=1}^{n-1}(1+a_j)\right]^{-\frac{1}{2}}\int_{y^2}^{\infty}t^{\frac{1-n}{2}}e^{-\frac{t}{2}}dt=2^{\frac{1-n}{2}}\left[ \prod_{j=1}^{n-1}(1+a_j)\right]^{-\frac{1}{2}}\int_{\frac{y^2}{2}}^{\infty}s^{\frac{1-n}{2}}e^{-s}ds\\
&=&2^{\frac{1-n}{2}}\left[ \prod_{j=1}^{n-1}a_j\right]^{-\frac{1}{2}}\Gamma\left(\frac{3-n}{2},\frac{y^2}{2}\right),
\end{eqnarray*}
where $\Gamma(a,x)$ is the incomplete Gamma function defined by
\begin{equation*}
    \Gamma(a,x)=\int_{x}^{\infty}t^{a-1}e^{-t}dt.
\end{equation*}
Notice that we have the estimate for $x>0$ and $a<1$:
\begin{equation}\label{e-Gamma}
    \frac{x}{x+1-a}<x^{1-a}e^x\Gamma(a,x)<\frac{x+1}{x+2-a}.
\end{equation}
Therefore, we have for $y\geq 1$
\begin{equation*}
    v(y)\geq \left[ \prod_{j=1}^{n-1}(1+a_j)\right]^{-\frac{1}{2}}\frac{y^{3-n}}{y^2+n-1}e^{-\frac{y^2}{2}}\geq \frac{1}{n\sqrt{ \prod_{j=1}^{n-1}(1+a_j)}}y^{1-n}e^{-\frac{y^2}{2}}.
\end{equation*}
This proves the lemma.
\end{proof}

\vspace{.1in}

We next estimate $K_{\frac{n}{2}-1}\left(\frac{|X|}{2}\right)$. Since $\theta(y)= \sum_{j=1}^{n-1}\phi_j(y)+\gamma(y)
$ is nonincreasing for $y$, we see that
\begin{equation*}
0<\underline\theta\leq  \sum_{j=1}^{n-1}\phi_j(y)+\gamma(y)\leq \pi-\underline\theta.
\end{equation*}
Therefore, by (\ref{E-X}), we have
\begin{eqnarray}\label{E-X-2}
|X|^2\geq \left( \sum_{j=1}^{n-1}\phi_j(y)+\gamma(y)\right)^2\geq \underline\theta^2>0.
\end{eqnarray}
In particular, 
\begin{eqnarray*}
\frac{|X|}{2}\geq \frac{1}{2}\underline\theta>0.
\end{eqnarray*}

\begin{lemma}\label{Lem-K0}
For each $\nu$, there is a positive constant $c_8$ depending on $\underline\theta$, $n$ and $\nu$ such that 
\begin{equation}\label{E-K0}
\sqrt{z}e^zK_{\nu}(z)\geq c_8>0, \ \ \forall z\geq \frac{1}{2}\underline\theta.
\end{equation}
\end{lemma}

\begin{proof}
It is known that the asymptotic behaviour of $K_{\nu}(z)$ as $z\to \infty$ is given by
\begin{equation*}
K_{\nu}(z)\sim \sqrt{\frac{\pi}{2z}}e^{-z}\sum_{k=0}^{\infty}\frac{a_k(\nu)}{z^k}.
\end{equation*}
Therefore, there is a positive constant $c_9>0$ and $B_1$ sufficiently large, such that 
\begin{equation*}
\sqrt{z}e^zK_{\nu}(z)\geq c_9, \ \ \forall z\geq B_1.
\end{equation*}
Since the function $\sqrt{z}e^zK_{\nu}(z)$ is positive and continuous on $\left[\frac{1}{2}\underline\theta,B_1\right]$, there is $c_{10}>0$ such that
 \begin{equation*}
\sqrt{z}e^zK_{\nu}(z)\geq c_{10}, \ \ \forall z\in\left[\frac{1}{2}\underline\theta,B_1\right].
\end{equation*}
Choose $c_8=\min\{c_9,c_{10}\}$, and then we have
 \begin{equation*}
\sqrt{z}e^zK_{\nu}(z)\geq c_8>0, \ \ \forall z\in\left[\frac{1}{2}\underline\theta,\infty\right).
\end{equation*}
This proves the lemma.
\end{proof}

\vspace{.1in}

Lemma \ref{Lem-K0} tells us that for $y\geq 1$,
\begin{equation*}
K_{\frac{n}{2}-1}\left(\frac{|X|}{2}\right)\geq c_{8}\left(\frac{|X|}{2}\right)^{-\frac{1}{2}}e^{-\frac{|X|}{2}}.
\end{equation*}
We also have
\begin{equation*}
e^{\frac{\langle X, {\bf T}\rangle}{2}}=e^{\frac{y^2- \sum_{j=1}^{n-1}x_j^2}{4}}.
\end{equation*}
Therefore, we have from (\ref{e-dmu}) and (\ref{e-H}) that for $y\geq 1$
\begin{eqnarray*}
& & \frac{|{\bf H}_{\Sigma}|^2}{|X|^{\frac{n}{2}-1}}e^{\frac{\langle X, {\bf T}\rangle}{2}}K_{\frac{n}{2}-1}\left(\frac{|X|}{2}\right)d\mu\\
&=& e^{\frac{y^2-\sum_{j=1}^{n-1}x_j^2}{4}}|X|^{1-\frac{n}{2}}K_{\frac{n}{2}-1}\left(\frac{|X|}{2}\right)\cdot \frac{1}{\left( \sum_{j=1}^{n-1}\frac{x_j^2}{\frac{1}{a_j}+y^2}+1\right)\cdot  \prod_{j=1}^{n-1}(1+a_jy^2)\cdot e^{y^2}}\\
& & \cdot \frac{\left( \sum_{j=1}^{n-1}\frac{x_j^2}{\frac{1}{a_j}+y^2}+1\right)\cdot  \prod_{j=1}^{n-1}\left(\frac{1}{a_j}+y^2\right)\cdot\sqrt{ \prod_{j=1}^{n-1}a_j\cdot e^{y^2}y^2}}{\sqrt{ \prod_{j=1}^{n-1}(1+a_jy^2)\cdot e^{y^2}-1}}dx_1\cdots dx_{n-1}dy
\\
&=& \frac{e^{\frac{y^2- \sum_{j=1}^{n-1}x_j^2}{4}}|X|^{1-\frac{n}{2}}K_{\frac{n}{2}-1}\left(\frac{|X|}{2}\right)\cdot y}{\sqrt{ \prod_{j=1}^{n-1}a_j\cdot e^{y^2}}\cdot \sqrt{ \prod_{j=1}^{n-1}(1+a_jy^2)\cdot e^{y^2}-1}}dx_1\cdots dx_{n-1}dy
\\
&\geq& \frac{e^{\frac{y^2- \sum_{j=1}^{n-1}x_j^2}{4}}|X|^{1-\frac{n}{2}}\cdot c_8\left(\frac{|X|}{2}\right)^{-\frac{1}{2}}e^{-\frac{|X|}{2}}\cdot y}{\sqrt{ \prod_{j=1}^{n-1}a_j\cdot e^{y^2}}\cdot \sqrt{ \prod_{j=1}^{n-1}(1+a_jy^2)\cdot e^{y^2}-1}}dx_1\cdots dx_{n-1}dy\\
&=& \frac{\sqrt{2}c_8 e^{\frac{y^2- \sum_{j=1}^{n-1}x_j^2}{4}}|X|^{\frac{1-n}{2}}e^{-\frac{|X|}{2}}\cdot y}{\sqrt{ \prod_{j=1}^{n-1}a_j\cdot e^{y^2}}\cdot \sqrt{ \prod_{j=1}^{n-1}(1+a_jy^2)\cdot e^{y^2}-1}}dx_1\cdots dx_{n-1}dy\\
&=&\frac{\sqrt{2}c_8y\left[\left(\frac{ \sum_{j=1}^{n-1}x_j^2+y^2}{2}\right)^2+ \sum_{j=1}^{n-1}\frac{x_j^2}{a_j}+\left( \sum_{j=1}^{n-1}\phi_j(y)+\gamma(y)\right)^2\right]^{\frac{1-n}{4}}}{\sqrt{ \prod_{j=1}^{n-1}a_j}\cdot \sqrt{ \prod_{j=1}^{n-1}(1+a_jy^2)\cdot e^{y^2}-1}}\\
& & \cdot e^{-\frac{y^2+ \sum_{j=1}^{n-1}x_j^2}{4}-\frac{\sqrt{\left(\frac{ \sum_{j=1}^{n-1}x_j^2+y^2}{2}\right)^2+ \sum_{j=1}^{n-1}\frac{x_j^2}{a_j}+\left( \sum_{j=1}^{n-1}\phi_j(y)+\gamma(y)\right)^2}}{2}}dx_1\cdots dx_{n-1}dy\\
&\geq& c_{12}\frac{y^{2-n}}{\left( \sum_{j=1}^{n-1}x_j^2+y^2\right)^{\frac{n-1}{2}}}e^{- \sum_{j=1}^{n-1}x_j^2-y^2}dx_1\cdots dx_{n-1}dy\\
&\geq& c_{12}\frac{y^{3-2n}}{\left( \sum_{j=1}^{n-1}x_j^2+1\right)^{\frac{n-1}{2}}}e^{- \sum_{j=1}^{n-1}x_j^2-y^2}dx_1\cdots dx_{n-1}dy\\
&=& c_{12}\frac{e^{- \sum_{j=1}^{n-1}x_j^2}}{\left( \sum_{j=1}^{n-1}x_j^2+1\right)^{\frac{n-1}{2}}}y^{3-2n}e^{-y^2}dx_1\cdots dx_{n-1}dy.
\end{eqnarray*}
Here we have used the fact that for $y\geq 1$,
\begin{equation*}
\frac{y}{\sqrt{ \sum_{j=1}^{n-1}x_j^2+y^2}}\geq \frac{1}{\sqrt{ \sum_{j=1}^{n-1}x_j^2+1}}.
\end{equation*}
Therefore, we have
\begin{eqnarray}\label{E-necessary-TS-LHS}
& & \int_{\Sigma}f_{\delta}''\frac{|{\bf H}_{\Sigma}|^2}{|X|^{\frac{n}{2}-1}}e^{\frac{\langle X, {\bf T}\rangle}{2}}K_{\frac{n}{2}-1}\left(\frac{|{\bf T}||X-{\bf a}|}{2}\right) d\mu\geq c_{13}\int_1^{\infty}y^{3-2n}e^{-y^2}dy.
\end{eqnarray}
On the other hand, using (\ref{E-v}), we see that 
\begin{equation*}
dv=-\frac{y}{\sqrt{ \prod_{j=1}^{n-1}(1+a_jy^2)\cdot e^{y^2}-1}}dy.
\end{equation*}
Furthermore,  using Lemma \ref{Lem-theta}, we see that $\{v\leq s_0\}\subset\{y\geq 1\}$, where $s_0$ is given by 
\begin{equation}\label{e-s-0}
    s_0=\frac{1}{n\sqrt{ \prod_{j=1}^{n-1}(1+a_j)}}e^{-\frac{1}{2}}.
\end{equation}
Hence, we have
\begin{eqnarray}\label{E-necessary-TS-LHS-2}
& & \int_{\Sigma}f_{\delta}''\frac{|{\bf H}_{\Sigma}|^2}{|X|^{\frac{n}{2}-1}}e^{\frac{\langle X, {\bf T}\rangle}{2}}K_{\frac{n}{2}-1}\left(\frac{|{\bf T}||X-{\bf a}|}{2}\right) d\mu\\
&\geq& c_{14}\int_0^{s_0}\frac{y^{2-2n}e^{-y^2}\sqrt{ \prod_{j=1}^{n-1}(1+a_jy^2)\cdot e^{y^2}-1}}{2[A-\log(v+\delta)](v+\delta)^2}dv.\nonumber
\end{eqnarray}
Applying Lemma \ref{Lem-theta} again, we obtain for $y\geq 1$ that
\begin{eqnarray*}
y^{2-2n}e^{-y^2}\sqrt{ \prod_{j=1}^{n-1}(1+a_jy^2)\cdot e^{y^2}-1}
\sim y^{1-n}e^{-\frac{y^2}{2}} \sim v(y).
\end{eqnarray*}
Thus we get that
\begin{eqnarray}\label{E-necessary-TS-LHS-3}
& & \int_{\Sigma}f_{\delta}''\frac{|{\bf H}_{\Sigma}|^2}{|X|^{\frac{n}{2}-1}}e^{\frac{\langle X, {\bf T}\rangle}{2}}K_{\frac{n}{2}-1}\left(\frac{|{\bf T}||X-{\bf a}|}{2}\right) d\mu\\
&\geq& c_{15}\int_0^{s_0}\frac{v}{2[A-\log(v+\delta)](v+\delta)^2}dv\nonumber\\
&=& c_{15}\int_0^{s_0}\frac{1}{2[A-\log(v+\delta)](v+\delta)}dv-c_{15}\delta\int_0^{s_0}\frac{1}{2[A-\log(v+\delta)](v+\delta)^2}dv\nonumber\\
&=&\frac{c_{15}}{2}\log(A-\log\delta)-\frac{c_{15}}{2}\log(A-\log(s_0+\delta))\nonumber\\
& &-\frac{c_{15}\delta}{2}\int_0^{s_0}\frac{1}{[A-\log(v+\delta)](v+\delta)^2}dv.\nonumber
\end{eqnarray}
To estimate the last term, we compute using integrating by parts that
\begin{eqnarray*}
\int_0^{s_0}\frac{1}{[A-\log(v+\delta)](v+\delta)^2}dv
&=& \frac{1}{\delta(A-\log\delta)}-\frac{1}{(s_0+\delta)[A-\log(s_0+\delta)]}\\
&& +\int_0^{s_0}\frac{1}{[A-\log(v+\delta)]^2(v+\delta)^2}dv
\end{eqnarray*}
By the choice of $A$, we get that
\begin{eqnarray*}
&& \frac{1}{\delta(A-\log\delta)}-\frac{1}{(s_0+\delta)[A-\log(s_0+\delta)]}\\
&= &\int_0^{s_0}\left(\frac{1}{[A-\log(v+\delta)](v+\delta)^2}-\frac{1}{[A-\log(v+\delta)]^2(v+\delta)^2}\right)dv\\
&\geq &\frac{1}{2}\int_0^{s_0}\frac{1}{[A-\log(v+\delta)](v+\delta)^2}dv
\end{eqnarray*}
so that
\begin{eqnarray*}
&& \frac{c_{15}\delta}{2}\int_0^{s_0}\frac{1}{[A-\log(v+\delta)](v+\delta)^2}dv\\
&\leq &c_{15}\delta\left(\frac{1}{\delta(A-\log\delta)}-\frac{1}{(s_0+\delta)[A-\log(s_0+\delta)]}\right)\\
&= &c_{15}\left(\frac{1}{A-\log\delta}-\frac{\delta}{(s_0+\delta)[A-\log(s_0+\delta)]}\right).
\end{eqnarray*}
Plugging this inequality into (\ref{E-necessary-TS-LHS-3}) yields
\begin{eqnarray}\label{E-necessary-TS-LHS-4}
& & \int_{\Sigma}f_{\delta}''\frac{|{\bf H}_{\Sigma}|^2}{|X|^{\frac{n}{2}-1}}e^{\frac{\langle X, {\bf T}\rangle}{2}}K_{\frac{n}{2}-1}\left(\frac{|{\bf T}||X-{\bf a}|}{2}\right) d\mu\\
&\geq& \frac{c_{15}}{2}\log(A-\log\delta)-\frac{c_{15}}{2}\log(A-\log(s_0+\delta))\nonumber\\
& &-c_{15}\frac{1}{A-\log\delta}+c_{15}\frac{\delta}{(s_0+\delta)[A-\log(s_0+\delta)]}.\nonumber
\end{eqnarray}

\vspace{.1in}

Combining with (\ref{E-necessary-TS-2}) and (\ref{E-RHS}), we get that
\begin{eqnarray}\label{E-necessary-TS-LHS-5}
 \frac{c_{15}}{2}\log(A-\log\delta)
&\leq & c_7+\frac{c_{15}}{2}\log(A-\log(s_0+\delta))\nonumber\\
& &+c_{15}\frac{1}{A-\log\delta}-c_{15}\frac{\delta}{(s_0+\delta)[A-\log(s_0+\delta)]}
.\nonumber
\end{eqnarray}
Now we complete the proof of Theorem \ref{Thm-1-2} by letting $\delta\to 0$.
\end{proof}

 \vspace{.2in}

	{\small}
	
\end{document}